\documentclass[11pt]{article}

\usepackage{amsmath,amssymb,amsthm,amsfonts,latexsym,tikz,hyperref,shuffle}
\usepackage[hmargin=1in,vmargin=1in]{geometry}
\usepackage[notref, notcite]{showkeys}
\usepackage{ytableau}
\usepackage{comment}
\usepackage[normalem]{ulem}
\usepackage{tikz}
\usepackage{mathtools}


\DeclarePairedDelimiter{\ceil}{\lceil}{\rceil}

\DeclareMathOperator{\cPin}{cPin}
\DeclareMathOperator{\Pin}{Pin}

\newcommand{\floor}[1]{\lfloor #1 \rfloor}

\makeatletter
\providecommand*{\shuffle}{%
  \mathbin{\mathpalette\shuffle@{}}%
}
\newcommand*{\shuffle@}[2]{%
  \sbox0{$#1\vcenter{}$}%
  \kern .15\ht0 
  \rlap{\vrule height .25\ht0 depth 0pt width 2.5\ht0}%
  \raise.1\ht0\hbox to 2.5\ht0{%
    \vrule height 1.75\ht0 depth -.1\ht0 width .17\ht0 %
    \hfill
    \vrule height 1.75\ht0 depth -.1\ht0 width .17\ht0 %
    \hfill
    \vrule height 1.75\ht0 depth -.1\ht0 width .17\ht0 %
  }%
  \kern .15\ht0 
}
\makeatother

\newtheorem{thm}{Theorem}[section]
\newtheorem{prop}[thm]{Proposition}
\newtheorem{cor}[thm]{Corollary}
\newtheorem{lem}[thm]{Lemma}
\newtheorem{conj}[thm]{Conjecture}
\newtheorem{exa}[thm]{Example}

\newtheorem{theorem}[thm]{Theorem}

\newtheorem{maybe theorem}[thm]{Maybe Theorem}

\newcommand{\ben}{\begin{enumerate}}
\newcommand{\een}{\end{enumerate}}
\newcommand{\ble}{\begin{lem}}
\newcommand{\ele}{\end{lem}}
\newcommand{\bth}{\begin{thm}}
\renewcommand{\eth}{\end{thm}}
\newcommand{\bpr}{\begin{prop}}
\newcommand{\epr}{\end{prop}}
\newcommand{\bco}{\begin{cor}}
\newcommand{\eco}{\end{cor}}
\newcommand{\bcon}{\begin{conj}}
\newcommand{\econ}{\end{conj}}
\newcommand{\bde}{\begin{defn}}
\newcommand{\ede}{\end{defn}}
\newcommand{\bex}{\begin{exa}}
\newcommand{\eex}{\end{exa}}
\newcommand{\barr}{\begin{array}}
\newcommand{\earr}{\end{array}}
\newcommand{\btab}{\begin{tabular}}
\newcommand{\etab}{\end{tabular}}
\newcommand{\beq}{\begin{equation}}
\newcommand{\eeq}{\end{equation}}
\newcommand{\bea}{\begin{eqnarray*}}
\newcommand{\eea}{\end{eqnarray*}}
\newcommand{\bal}{\begin{align*}}
\newcommand{\bce}{\begin{center}}
\newcommand{\ece}{\end{center}}
\newcommand{\bpi}{\begin{picture}}
\newcommand{\epi}{\end{picture}}
\newcommand{\bpp}{\begin{picture}}
\newcommand{\epp}{\end{picture}}
\newcommand{\bfi}{\begin{figure} \begin{center}}
\newcommand{\efi}{\end{center} \end{figure}}
\newcommand{\bprf}{\begin{proof}}
\newcommand{\eprf}{\end{proof}\medskip}

\newcommand{\bsl}{\begin{slide}{}}
\newcommand{\esl}{\end{slide}}
\newcommand{\bfr}{\begin{frame}}
\newcommand{\efr}{\end{frame}}

\newcommand{\hso}[1]{\hspace{-1pt}}





\def\<{\langle}
\def\>{\rangle}



\newcommand{\0}{{\bf 0}}
\newcommand{\1}{{\bf 1}}
\newcommand{\2}{{\bf 2}}
\newcommand{\3}{{\bf 3}}


\DeclareMathOperator{\ord}{ord}

\begin{document}

\title{Further Results on Pinnacle Sets}
\author{
Quinn Minnich\\[-1pt]
\small Department of Mathematics, Michigan State University,\\[-1pt]
\small East Lansing, MI 48824-1027, USA, {\tt minnichq@msu.edu}\\
}

\date{\today\\[10pt]
	\begin{flushleft}
	\small Key Words: admissible orderings, cyclic permutations, Motzkin walks, permutation, pinnacle set, Stirling numbers of the second kind
	                                       \\[5pt]
	\small AMS subject classification (2010):  05A05  (Primary) 05A10, 05A18, 05A19 (Secondary)
	\end{flushleft}}

\maketitle

\begin{abstract}

The study of pinnacle sets has been a recent area of interest in combinatorics. Given a permutation, its pinnacle set is the set of all values larger than the values on either side of it. Largely inspired by conjectures posed by Davis, Nelson, Petersen, and Tenner and also results proven recently by Fang, this paper aims to add to our understanding of pinnacle sets. In particular, we give a simpler and more combinatorial proof of a weighted sum formula previously proven by Fang. Additionally, we give a recursion for counting the admissible orderings of the elements of a potential pinnacle set. Finally, we give another way of viewing pinnacle sets that sheds light on their structure and also yields a recursion for counting the number of permutations with that pinnacle set.

\end{abstract}

\section{Introduction}

Given a permutation $\pi$ on some subset of the elements in $[n]$, we define its \textit{pinnacle set} to be
\begin{equation}
    \label{Pin}
   \Pin \pi = \{\pi_i \mid \pi_{i-1} < \pi_{i} > \pi_{i+1}\}. 
\end{equation}
In other words, the pinnacle set of $\pi$ is the set of all elements of $\pi$ that are between two values smaller than itself. This is similar to the peak set of $\pi$, except that the peak set keeps track of the indices of such values rather than the values themselves. Let $S_n$ be the symmetric group on $n$ elements. Then given a set $P \subseteq [n]$, we say that $P$ is an \textit{admissible pinnacle set} or \textit{pinnacle set} for short if there exists some $\pi \in S_n$ such that $\Pin \pi = P$. 

Some of the initial work related pinnacle sets (although under a different name) was done by Strehl in \cite{Strehl_1978} where he studied permutations in which pinnacles and non-pinnacles alternated. The concept was rediscovered independently by Davis, Nelson, Petersen, and Tenner in \cite{DNPT_2018} where they proved several initial results, including that the number of admissible pinnacle sets $P \subseteq [n]$ is counted by a central binomial coefficient. Not long afterwards, Domagalski et al.\ gave a formula in \cite{DLMSSS_2021} for the number of permutations in $S_n$ having a given pinnacle set $P$ using the principle of inclusion and exclusion. Around the same time, Diaz-Lopez et al.\ gave a different formula for the same number by keeping track of \textit{vale sets}, which are those elements of a permutation $\pi$ smaller than those on either side. Both of these formulas however ran in time exponential in the number of elements of $P$. 

Shortly afterwards, Falque et al.\
published some results in \cite{FNT_2021} which included a recursive formula for the number of permutations having a fixed pinnacle set that ran in time $nk^2$ where $k$ was the number of elements in $P$ and $n$ was the length of the permutations being counted. Finally, Fang proved a result in \cite{Fang_2021} which computed the number in $O(k^2\log n + k^4)$ time.

In addition to results that pertained to the number of permutations with a given pinnacle set, Rusu in \cite{Rusu_2020} studied transforming a permutation with a given pinnacle set into another permutation with the same pinnacle set such that the pinnacle set was preserved in all intermediate steps if the transformation. Additional work was also done by Rusu and Tenner in \cite{RT_2021} characterizing the orderings in which the elements of some pinnacle set $P$ could appear within a permutation with pinnacle set $P$. 

A \textit{cyclic permutation} for some $\pi = \pi_1\pi_2\cdots\pi_n$, denoted $[\pi]$, is the set of all rotations
$$[\pi] = \{\pi_1\pi_2\cdots\pi_n, \pi_n\pi_1\cdots\pi_{n-1}, \ldots, \pi_2\cdots\pi_n\pi_1\}.$$
Alternately, one may think of the cyclic permutation $[\pi]$ as being the linear permutation $\pi$ where the last elements is considered to be adjacent to the first. The pinnacle set of a cyclic permutation $[\pi]$, denoted $\cPin [\pi]$, is then defined to be as in equation~\eqref{Pin} with the subscripts taken modulo $n$. Let $[S_n]$ be the set of all cyclic permutations on $n$ elements. The pinnacle sets of linear and cyclic permutations are related in the following way. Given some linear permutation $\pi \in S_n$ with pinnacle set $P$, we may add the element $n+1$ to the end of $\pi$ and wrap it around into a circle to get $[\pi']$, which must then have pinnacle set $P' = P \cup \{n+1\}$. Therefore, when deriving results about pinnacle sets of linear permutations, we may often switch to the cyclic version of the problem, which can eliminate boundary cases.

Cyclic permutations have been the focus of a lot of recent study. Pattern avoidance in the case of cyclic permutations was examined by Vella in \cite{Vella_2002} and also independently by Callan in \cite{Callan_2002}. Further results involving patterns in cyclic permutations were given by Gray, Lanning, and Wang in \cite{GLW_2018, GLW_2019}; by Czabarka and Wang in \cite{CW_2019} who proved a cyclic version of the Erd\H{o}s-Szekeres Theorem; and also by Domagalski et al.\ in \cite{DLMSSS_jun_2021}. Elizalde and Sagan then expanded on the results of Domagalski et al.\ in \cite{EB_2021} where they proved some of the remaining conjectures. In \cite{AGRR_2020}, Adin et al.\ studied cyclic quasi-symmetric functions which also related to cyclic shuffling of permutations. Finally, Domagalski et al.\ used circular permutations in \cite{DLMSSS_2021} to prove some results about pinnacle sets, including the formula mentioned above. 

It was first proven by Davis, Nelson, Petersen, and Tenner in \cite{DNPT_2018} that $P$ is a pinnacle set of a linear permeation if and only if, for every $p \in P$, the number of pinnacle elements in $P$ less than or equal to $p$ is exceeded by the number of elements not in $P$ that are less than $p$. But while it is easy to determine if a given set $P$ is a pinnacle set, many other aspects about pinnacle sets seem difficult to analyze. One question that has received a good deal of interest has been counting the number of permutation $\pi \in S_n$ that have a given pinnacle set $P \subseteq S_n$. To this end, we define 

$$p_n (P) = \{\pi \in S_n \mid \Pin (\pi) = P\}$$ 

\noindent to be the set of permutations with a pinnacle set $P$. Then if for some set $S$ we let $|S|$ denote the carnality of $S$, we have that $|p_n(P)|$ is the number of permutations with pinnacle set $P$. In the event that $P$ is not a pinnacle set, we will simply have $p_n(P)=\emptyset$ and throughout this paper, we assume that $P$ is an arbitrary subset of $[n]$ and do not require it to necessarily be an admissible pinnacle set. Multiple formulas and recursions for $|p_n (P)|$ have been proven, including some in \cite{DNPT_2018, DLMSSS_2021, DHHIN_2021, FNT_2021, Fang_2021}.

In \cite{Fang_2021}, Fang proved a formula for the weighted sum of the $|p_n(Q)|$ over all $Q \subseteq P$, but the proof was highly computational and involved Lagrange interpolation. Additionally, his formula needed to assume that $P$ was an admissible pinnacle set. In section 1 of this paper, we present a simpler, more combinatorial proof of his formula that also works for any $P \subseteq [n]$ by forming a bijection between elements of the sum and a weighted version of Motzkin walks.

Given an arbitrary set $P \subset [n]$, we can also study the ordering of the elements of $P$ within any $\pi$ having $\Pin(\pi) = P$. In some cases, only some orderings of $P$ will appear in any of the $\pi \in p_n(P)$, and we call all such orderings the \textit{admissible orderings of $P$}. In \cite{DNPT_2018} the question was raised as to whether there was a way to count the number of admissible orderings of a given $P$, but to our knowledge, no general formulas have been given. In section 2, we present a recursion that can find the number of admissible orderings of any set $P \subseteq [n]$ with a run time that depends only on $|P|$ and not $n$. 

Finally, in section 4 we give an alternate representation of pinnacle sets in the form of blocks consisting of $1$'s and $0$'s inspired by a suggestion in \cite{FNT_2021} which sheds some light on the internal structure of pinnacle sets. We then use this representation to prove a recursion that can find $|p_n(P)|$ in run time equal to the algorithm presented in \cite{Fang_2021}, which is currently the fastest known algorithm, and that can be further used to prove a version of a formula conjectured in \cite{FNT_2021}. 

\section{A formula for the weighted sum of $|p_n(P)|$}

Define a \textit{Dyck path} with $k$ steps to be a path starting at $(0,0)$, ending at $(k,0)$, that uses only down steps $[1,-1]$ and up steps $[1,1]$, and that never drops below the $x$-axis. A \textit{Dyck walk} of length $k$ is a generalised version of a Dyck path where we allow the path to end at any point $(k,y)$ for $y\geq 0$. We then define a \textit{modified Dyck walk} to be a Dyck walk where the last step may possibly drop below the $x$-axis. In other words, a modified Dyck walk is a walk starting at $(0,0)$ consisting of only up steps and down steps that may end at any height $h \geq -1$, and that does not drop below the $x$-axis except possibly at the final step. Define $R_k$ be the set of $y$-coordinate sequences of modified Dyck walks of length $k$. A walk $r\in R_k$ takes the form $(r_0, r_1, \ldots r_k)$ where $r_i$ is the height of the end of the $i$th step and $r_0 = 0$. From this, we have that $r_i \geq 0$ for all $0\leq i < k, \, r_k\geq -1,$ and $r_{i} = r_{i-1} \pm 1$ for all $1 \leq i \leq k$. 

We will give a generalization of Theorem 1.2 found in \cite{Fang_2021} with a simpler, combinatorial proof.

\begin{theorem}
\label{Main result 1}
For $n\geq 1$ and $P = \{p_1>p_2> \cdots >p_k\} \subseteq [n]$, we take the convention that $p_0 = n+1$ and $p_{k+1} = 1$. Then we have the following.
$$\sum_{Q \subseteq P} 2^{|Q|+1}|p_n(Q)| = 2^{n-k} \sum_{r\in R_k} \prod_{i=0}^k(r_i+1)^{p_i-p_{i+1}}.$$
\end{theorem}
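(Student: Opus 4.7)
My plan is to reduce the identity to an enumeration over cyclic permutations and then establish a bijection with decorated modified Dyck walks.

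First I would pass to the cyclic setting using the correspondence noted in the introduction: a linear permutation $\pi \in S_n$ with $\Pin(\pi) = Q$ corresponds to the cyclic permutation $[\pi']$ on $[n+1]$ obtained by appending $n+1$, and this satisfies $\cPin([\pi']) = Q \cup \{n+1\}$. Writing $P' = P \cup \{n+1\} = \{p_0, p_1, \ldots, p_k\}$ and using $|Q|+1 = |\cPin([\pi'])|$ gives
\[
\sum_{Q \subseteq P} 2^{|Q|+1} |p_n(Q)|
= \sum_{[\pi'] \,:\, \cPin([\pi']) \subseteq P'} 2^{|\cPin([\pi'])|}
= \#\{([\pi'], M) : M \subseteq \cPin([\pi']) \subseteq P'\},
\]
where in the final step I expand $2^{|\cPin([\pi'])|}$ as the number of subsets of $\cPin([\pi'])$. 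So the LHS counts cyclic permutations on $[n+1]$ whose cyclic pinnacles lie in $P'$, each augmented by a chosen subset $M$ of its cyclic pinnacles; the passage to cyclic form absorbs the awkward $2^{|Q|+1}$ factor into a clean $2^{|\cPin|}$.

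Next I would expand the RHS combinatorially. Writing $(r_i+1)^{p_i - p_{i+1}}$ as a product of $(r_i+1)$-factors indexed by the values in the interval $(p_{i+1}, p_i]$, and $2^{n-k}$ as $\prod_{v \in [n]\setminus P} 2$, the RHS becomes the count of triples $(r, c, b)$ where $r \in R_k$ is a modified Dyck walk, $c = (c_v)_{v=2}^{n+1}$ assigns each $v$ a value $c_v \in \{1, \ldots, r_{i(v)}+1\}$ with $i(v)$ the unique index such that $v \in (p_{i(v)+1}, p_{i(v)}]$, and $b = (b_v)_{v \in [n] \setminus P}$ is a bit sequence.

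The heart of the proof is then a bijection $\Phi$ between the augmented cyclic permutations $([\pi'], M)$ and the triples $(r, c, b)$. I would build $\Phi$ by processing the values of $[n+1]$ from $n+1$ down to $2$ and inserting each into the growing cyclic arrangement. The slots of the intermediate arrangements would be classified into combinatorial types controlled by the specials of $P'$ already placed, so that the number of types available when inserting $v$ is exactly $r_{i(v)}+1$; the choice $c_v$ then records the type selected, while the bit $b_v$ provides the refinement within that type for non-special $v$. Each special $p_i \in P$ additionally contributes one walk step, whose direction encodes how the insertion of $p_i$ alters the slot-type count; the $M$-information for $p_i$ is absorbed into this encoding rather than carried separately. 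The modified Dyck walk allowance $r_k = -1$ corresponds to degenerate boundary configurations that contribute weight zero.

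The principal obstacle will be pinning down the slot classification. One must identify a structural invariant of the partial arrangement after processing $p_i$ whose count equals $r_i + 1$, and verify that the combined data $(r, c, b)$ determines $([\pi'], M)$ uniquely via an invertible insertion procedure. This inverse direction, together with the accounting that threads the walk step, the $c$-choice, and the membership $p_i \in M$ into a single consistent event at each special, is the most delicate point of the argument.
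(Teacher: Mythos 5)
Your overall architecture --- pass to cyclic permutations on $[n+1]$, expand the right-hand side as decorated walks carrying a height label in $[r_i+1]$ for every value and an extra bit for every non-pinnacle value, and connect the two sides by an insertion procedure processed from the largest value down --- is exactly the paper's strategy. But the step you defer (``pinning down the slot classification'') is not a technicality; it is the entire content of the proof, and your chosen interpretation of the factor $2^{|Q|+1}$ actively obstructs it. The paper does \emph{not} read $2^{|Q|+1}$ as the number of subsets $M$ of the cyclic pinnacle set; it reads it as the number of ways to mark each of the $|Q|+1$ \emph{cyclic vales} left or right (vales and pinnacles alternate, so there are $|Q|+1$ of each), and then extends this to a forced left/right mark on every other element of $[\pi']$. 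These marks do double duty. First, they make the insertion invertible: when a new smallest element lands in a nonempty gap, the mark of the current minimum of that gap dictates on which side the new element sits. Second, they define which gaps between consecutive elements of $P'$ are ``available'' (a gap is available iff neither bounding element of $P'$ carries a left mark), and it is precisely the number of available gaps that equals $r_i+1$ after $p_i$ is placed --- a down step at $p_i$ kills a gap, an up step splits one into two. The invariant you say ``one must identify'' is this availability notion, and you cannot define it without the marks.

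Moreover, your $M$-based bookkeeping cannot be threaded locally in the way you propose. The walk has an up/down bit at every $p_i \in P$ ($k$ bits in total), whereas $M$ supplies one bit per element of $\cPin([\pi']) = Q \cup \{n+1\}$, i.e., $|Q|+1$ bits attached only to those values of $P'$ that actually end up as pinnacles --- and $Q$ is not determined by the step at $p_i$ alone (an up step at $p_i$ does not force $p_i \in Q$: if no smaller element is ever inserted on one side of $p_i$, it fails to be a pinnacle in the final arrangement). So ``absorbing the membership $p_i \in M$ into the direction of the step at $p_i$'' has no consistent local meaning for $p_i \in P \setminus Q$, and the bit recording whether $n+1 \in M$ has no walk step at all to live on. To repair this you would essentially have to transfer the $M$-data onto the vales (say, by pairing each cyclic vale with an adjacent cyclic pinnacle), at which point you have rederived the paper's marking scheme. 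A further small error: walks with $r_k = -1$ do not all ``contribute weight zero'' --- when $1 \in P$ the last factor is $(r_k+1)^{p_k - p_{k+1}} = 0^0 = 1$, and these walks are exactly the ones needed to make the identity hold for such $P$; they vanish only when $1 \notin P$.
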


This will be proven later, but first we need to set up some combinatorial objects.

\begin{figure}
    \begin{center}
    \begin{tikzpicture}
    \draw[help lines] (0,0) grid (9,2);,
        \draw (0,0)--(1,1)--(2,1)--(3,0)--(4,0)--(5,1)--(6,1)--(7,0)--(8,0)--(9,0);
        \node [above] at (0.5,0.5) {$1_r$};
        \node [above] at (1.5,1) {$1_l$};
        \node [above] at (2.5,0.5) {$2_l$};
        \node [above] at (3.5,0) {$1_r$};
        \node [above] at (4.5,0.5) {$1_r$};
        \node [above] at (5.5,1) {$2_l$};
        \node [above] at (6.5,0.5) {$2_l$};
        \node [above] at (7.5,0) {$1_l$};
        \node [above] at (8.5,0) {$1_r$};
        
        \node [below] at (0.5,0) {$9$};
        \node [below] at (1.5,0) {$8$};
        \node [below] at (2.5,0) {$7$};
        \node [below] at (3.5,0) {$6$};
        \node [below] at (4.5,0) {$5$};
        \node [below] at (5.5,0) {$4$};
        \node [below] at (6.5,0) {$3$};
        \node [below] at (7.5,0) {$2$};
        \node [below] at (8.5,0) {$1$};
    \end{tikzpicture}
    \caption{An example of a walk in $M_9(P)$ for $p = \{9,7,5,3\}$}
    \label{Example 1}
    \end{center}
\end{figure}
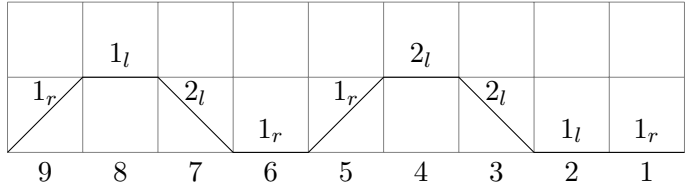

Define the set of \textit{decorated Motzkin walks}, $M_n (P)$, to be the set of all lattice walks with $n$ steps, starting at $(0,0)$, consisting of steps $[1,-1],[1,0],$ and $[1,1]$, such that every step starts weakly above the $x$-axis, and where the steps are numbered $1,2, \ldots, n$ from right to left. Note that this will allow walks that can go below the $x$-axis only on their final step. The generating function for those paths of $M_n(P)$ whose last step does not go below the $x$-axis, along with the generating function for much more general Motzkin walks, has been studied by Alexios in \cite{Poly_2021} although for our purposes here we will not need the generating function.

Furthermore, we require that in each walk of $M_n(P)$ the step numbered $k$ be either the up step $[1,1]$ or the down step $[1,-1]$ if $k\in P$, and the horizontal step $[1,0]$ if $k \not \in P$. Finally, we give every step of the walk two labels. The first is an integer in $[h+1]$ where $h$ is the height of the starting point of the step. The second label is either a right or left designation subject to the condition that down steps are always labeled as left and up steps always labeled as right. An example of an element in $M_n(P)$ is given in Figure \ref{Example 1} where the number of each step is at the bottom of the figure and the label of each step is directly above the step. Note that for simplicity, we have combined both labels by giving $l$ and $r$ subscripts to the height labels. For instance, the label given to the third step in Figure \ref{Example 1} is $2_l$ meaning it has a height label of $2$ and a left designation. 

The count for the number of walks in $M_n(P)$ is given by the following result.

\begin{theorem}
Suppose $n\geq 1$ and $P \subseteq [n]$. Then the formula
$$|M_n(P)| = 2^{n-k} \sum_{r\in R_k} \prod_{i=0}^k(r_i+1)^{p_i-p_{i+1}}$$
gives the count for the number of distinct decorated Motzkin walks.
\end{theorem}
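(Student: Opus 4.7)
I would argue by a direct enumeration: decompose each walk in $M_n(P)$ into its underlying shape plus its labels, count each separately, and sum. The \emph{shape} is captured by $r = (r_0, r_1, \ldots, r_k)$, the sequence of heights reached immediately after each of the $k$ up/down steps, with $r_0 = 0$. Since the step numbered $k$ must be up/down precisely when $k \in P$, these up/down steps occur (reading left to right) at positions $p_1 > \cdots > p_k$, so $r$ determines the shape completely: the $i$-th up/down step is up when $r_i - r_{i-1} = 1$ and down when $r_i - r_{i-1} = -1$, and the horizontal stretches between them sit at heights $r_0, r_1, \ldots, r_k$. The requirement that every step start weakly above the axis forces $r_i \geq 0$ for $i < k$, and also $r_k \geq 0$ unless $p_k = 1$ (so that no horizontal step follows), in which case $r_k = -1$ is permitted.

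Once the shape is fixed, I would count decorations in two pieces. The left/right label is forced on the $k$ up/down steps (up $=$ right, down $=$ left) and free on the $n-k$ horizontal steps, contributing $2^{n-k}$ overall. For height labels, each step with starting height $h$ has $h+1$ choices, so I would group steps by starting height. Using the conventions $p_0 = n+1$ and $p_{k+1} = 1$, the steps whose starting height equals $r_i$ consist of the horizontal stretch at that level together with the next up/down step (for $i < k$), or just the trailing horizontal stretch (for $i = k$). A direct count shows each such block has $p_i - p_{i+1}$ steps: at $i = 0$ the $n - p_1$ initial horizontals together with the first up/down; for intermediate $i$ the $p_i - p_{i+1} - 1$ horizontals absorb the next up/down; at $i = k$ the $p_k - 1$ trailing horizontals stand alone. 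Multiplying yields a total height-label contribution of $\prod_{i=0}^{k} (r_i + 1)^{p_i - p_{i+1}}$.

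Summing over admissible shapes gives the formula, but the admissible set is strictly contained in $R_k$ when $p_k > 1$, since in that case $r$ may not end at height $-1$. The cleanest way to extend the sum to all of $R_k$ is the observation that any inadmissible sequence has $r_k = -1$, so the factor $(r_k + 1)^{p_k - p_{k+1}} = 0^{p_k - 1}$ vanishes; and when $p_k = 1$ the exponent is $0$, so admissible $r_k = -1$ terms survive as $0^0 = 1$. Multiplying by $2^{n-k}$ and summing over $r \in R_k$ therefore gives the stated identity. The main subtlety I expect is the boundary bookkeeping at the final up/down step and the correct accounting of which horizontal steps live at which height; once the conventions $p_0 = n+1$ and $p_{k+1} = 1$ are used to unify the stretch lengths as $p_i - p_{i+1}$, the rest is a clean telescoping product.
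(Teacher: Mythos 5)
Your proposal is correct and follows essentially the same route as the paper: strip the horizontal steps to reduce the shape to a sequence $r\in R_k$, observe that the $p_i-p_{i+1}$ steps at starting height $r_i$ each admit $r_i+1$ height labels, and multiply by $2^{n-k}$ for the free left/right labels on horizontal steps. Your explicit handling of the boundary case $r_k=-1$ with $p_k>1$ (where the shape does not lift to a walk in $M_n(P)$ but the factor $0^{p_k-1}$ kills the term) is a point the paper passes over silently, and is a welcome bit of extra care.
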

\begin{proof}
If we consider any walk in $M_n(P)$ we note that by removing all horizontal steps we reduce the walk to one in $R_k$. We may also reverse this process by starting with a walk $r\in R_k$ and adding horizontal steps as specified by $P$. Therefore, to find $|M_n(P)|$ we need only sum over all sequences $r \in R_k$ and for each resulting walk, determine how many different labelings are possible. 

Consider all walks in $M_n(P)$ corresponding to a fixed $r\in R_k$. Note that the height of the step corresponding to $p_{i+1}$ and the heights of the steps between $p_i$ and $p_{i+1}$ will have starting point at height $r_i$ since the height can only change on a step corresponding to an element in $P$. Therefore there are $r_i+1$ choices for each of the labels of these $p_i-p_{i+1}$ steps. Additionally, since only horizontal steps have a choice for subscripts, we must multiply by a factor of 2 for each element not in $P$. Therefore, the total count is:
$$|M_n(P)| = 2^{n-k} \sum_{r\in R_k} \prod_{i=0}^k(r_i+1)^{p_i-p_{i+1}}$$
This completes the proof.
\end{proof}

Given a $\pi \in S_n$ and arbitrary $P\subseteq [n]$, let $[\pi']$ be the cyclic permutation obtained by adding the element $n+1$ to the end of $\pi$ and wrapping it around into a circle, and let $P' = P \cup \{n+1\}$. We then have that the cyclic permutations with $\cPin [\pi'] = P'$ are in bijection to those in $p_n(P)$, since this operation is invertible, and preserves all pinnacles except $n+1$. Define an \textit{intermediate set} to be the set of all elements between (but not including) two consecutive elements of $P'$ in $[\pi']$. Intermediate sets may be empty. Given $\pi$  a \emph{cyclic vale} is any vale of $[\pi']$. Because $n+1$ will always be a pinnacle, all cyclic vales will be in $[n]$ and will alternate with the pinnacles, which means there will be exactly $k+1$ of them in $[\pi']$ where $k$ is the number of pinnacles of $\pi$.

In order to prove Theorem \ref{Main result 1}, we will interpret the sum 
$$\sum_{Q \subseteq P} 2^{|Q|+1}|p_n(Q)|$$ 
to be counting the number of cyclic permutations such that $[\pi'] \in [S_{n+1}]$, $\cPin [\pi']  = Q' = Q \cup \{n+1\} \subseteq P'$, and where every element is marked as either left or right subject to certain restrictions. If we note that every element of a cyclic permutation is either a pinnacle, a vale, or in the middle of an increasing or decreasing sequence of three elements, we may impose the following labeling.
\begin{enumerate}
    \item Any element that is a cyclic vale may be marked as either right or left.
    \item Any element of $P$ that is not a cyclic vale is always marked as right.
    \item Any element not in $P$ that is in the middle of a decreasing sequence of 3 elements is always marked as right.
    \item Any element not in $P$ that is in the middle of an increasing sequence of 3 elements is always marked as left.
\end{enumerate} 
Let us call the set of all such permutations $V_n(P)$. Note that by this convention, all markings are forced except those at the cyclic vales, which are all free, and therefore the carnality of $V_n(P)$ is 
$$\sum_{Q \subseteq P} 2^{|Q|+1}|p_n(Q)|$$
since the number of cyclic vales is $|Q|+1$ and since the number of cyclic permutations with $\cPin [\pi'] = Q'$ is $|p_n(Q)|$.

Next, we define a map $f$ from $M_n(P)$ to $V_n(P)$. To do this, we read the steps of some $m\in M_n(P)$ one at a time and use it to build up a cyclic permutation. The process is described as follows.

\begin{enumerate}
    \item Start with the cyclic permutation consisting of only $p_0 = n+1$. In all future steps, an intermediate set will be said to be \textit{available} if neither of the elements in $P'$ that bound it have a left label. 
    To this end, $p_0$ is assumed to have a right label.
    \item Take the current leftmost step of walk $m$ and insert its step number, $i$, into the cyclic permutation in the following manner. Let the label of the step be $h_x$. Counting clockwise from $p_0$, insert $i$ into the $h$th available intermediate set and give it the left/right label corresponding to the subscript $x$. 
    If the intermediate set was empty, there is only one way to insert $i_x$.
    If the intermediate set is non-empty, look at the current smallest element of that set, $v$, and place $i$ adjacent to $v$ on the side corresponding to $v$'s label. Note that if $k$ was in $P'$, the available intermediate sets for future steps will be renumbered. 
    Finally, delete the step numbered $i$ from the walk.
    \item Repeat until all elements have been placed and the walk is empty.
\end{enumerate}

As an example of this algorithm, suppose that $P = \{9,7,5,3\}$ and consider the walk in Figure \ref{Example 1}. To compute the output of the map in $V_9(P)$ we successively generate the following permutations as we read the steps of $P$ from left to right, inserting the next element as indicated by the number and label of the step in the walk. Elements in $P$ are in bold to make it easier to see the intermediate sets.  Commentary is provided for the first few insertions to aid the reader.

\begin{align*}
    &[\mathbf{10_r}]\\[5pt]
    &[\mathbf{10_r},\mathbf{9_r}]: \text{$9_l$ is inserted in the only intermediate set available, splitting it in two.}\\[5pt]
    &[\mathbf{10_r},8_l,\mathbf{9_r}]: \text{$8_l$ is inserted in the first intermediate set out of the two since its step is labeled $1_l$.}\\[5pt]
    &[\mathbf{10_r}, 8_l,\mathbf{9_r},\mathbf{7_l}]: \text{$7_l$ is inserted in the second intermediate set because its label is $2_l$; from here to the end of}\\
    &\text{the algorithm, no other elements will be inserted between 9 and 7, or between 7 and 10 because $7 \in P$ and}\\
    &\text{step 7 has a left label.}\\[5pt]
    &[\mathbf{10_r},6_r,8_l,\mathbf{9_r},\mathbf{7_l}]: \text{$6_r$ is inserted to the left of the smallest element, $8$, of the first intermediate set because $8$}\\
    &\text{has a left label.}\\[5pt]
    &[\mathbf{10_r},6_r,\mathbf{5_r},8_l,\mathbf{9_r},\mathbf{7_l}]: \text{Inserting $5_r$ splits the only available intermediate set into two (one from $10$ to $5$ and}\\
    &\text{one from $5$ to $9$) since $5\in P$ and both are available since $5$ has a right label.}\\[5pt]
    &[\mathbf{10_r},6_r,\mathbf{5_r},4_l,8_l,\mathbf{9_r},\mathbf{7_l}]\\[5pt]
    &[\mathbf{10_r},6_r,\mathbf{5_r},\mathbf{3_l},4_l,8_l,\mathbf{9_r},\mathbf{7_l}]\\[5pt]
    &[\mathbf{10_r},6_r,2_l,\mathbf{5_r},\mathbf{3_l},4_l,8_l,\mathbf{9_r},\mathbf{7_l}]\\[5pt]
    &[\mathbf{10_r},6_r,1_r,2_l,\mathbf{5_r},\mathbf{3_l},4_l,8_l,\mathbf{9_r},\mathbf{7_l}]
\end{align*}

The last line is the final output.  

We will now prove some properties about this map so that we may show it is well-defined.

\begin{lem}
\label{labeling 1 for f} 
In the final result, all elements in $P'$ that are not cyclic vales are guaranteed to have right labels.
\end{lem}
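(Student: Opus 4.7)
The plan is to prove the contrapositive: if $p \in P'$ carries a left label in the output of the map $f$, then $p$ is a cyclic vale in the resulting cyclic permutation. The case $p = p_0 = n+1$ is immediate, since $p_0$ is given a right label by convention and, being the global maximum, is automatically a cyclic pinnacle. So the only interesting case is $p = p_j$ for some $1 \le j \le k$, and in that case $p_j$ has a left label precisely when the corresponding step in $m \in M_n(P)$ is a down step.

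The crucial observation is that $f$ inserts values in strictly decreasing order. The steps of $m$ are numbered $n, n-1, \ldots, 1$ from left to right, and at each iteration the leftmost remaining step is inserted, so values are inserted in the order $n, n-1, \ldots, 1$. Consequently, at the moment $p_j$ is placed into some intermediate set $I$ bounded by two elements $A, B \in P'$, every element already present in the cyclic permutation (including $A$, $B$, and every element sitting inside $I$) has value strictly greater than $p_j$.

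I then examine the two neighbors of $p_j$ immediately after insertion. If $I$ is empty, its neighbors are exactly $A$ and $B$. If $I$ is non-empty with minimum element $v$, then $p_j$ is placed directly adjacent to $v$ on $v$'s label side; writing out the cycle around $I$ as $A, e_1, \ldots, v, \ldots, e_m, B$ and inserting $p_j$ immediately clockwise or counterclockwise of $v$, the two neighbors of $p_j$ are seen to be $v$ together with either another $e_i$ or one of $A, B$. In every subcase both neighbors of $p_j$ at insertion time are strictly greater than $p_j$.

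Finally, since $p_j$ has been given a left label, both intermediate sets bordering $p_j$ fail the availability condition from this point on, as the definition requires neither bounding element to carry a left label. Thus no subsequent element is inserted adjacent to $p_j$, and the two neighbors of $p_j$ in the final cyclic permutation are exactly the neighbors recorded at insertion time. Both are greater than $p_j$, so $p_j$ is a cyclic vale, and the contrapositive gives the lemma. The only thing one has to keep honest is the non-empty-$I$ subcase of the insertion rule; once the placement of $p_j$ relative to $v$ and to $v$'s label side is spelled out, the conclusion is essentially forced by the decreasing-order insertion.
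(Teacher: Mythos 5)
Your proof is correct and follows essentially the same route as the paper's: argue the contrapositive, observe that elements are inserted in strictly decreasing order so a left-labeled element of $P'$ is a cyclic vale at the moment of insertion, and note that its left label renders both adjacent intermediate sets unavailable so that no later insertion can disturb its neighbors. The paper's version is just a terser statement of the same three observations.
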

\begin{proof}
This follows because when an element of $P'$ with a left label is initially inserted, it will become a cyclic vale.
Indeed, it is a cyclic vale upon insertion
since it is the smallest element inserted so far. And in all future steps no elements are ever inserted next to it on account of the intermediate sets on either side being unavailable.  So it will remain a cyclic vale until the end.
\end{proof}

\begin{lem}
\label{pinnacles for f}
The output of $f$ will always have pinnacles a subset of $P'$
\end{lem}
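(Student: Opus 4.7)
The plan is to show that no element $i \in [n] \setminus P$, i.e., no element inserted via a horizontal step of the walk, can become a pinnacle of the output $[\pi']$. Since $[\pi']$ has underlying element set $[n+1] = P' \cup ([n] \setminus P)$, this forces $\cPin[\pi'] \subseteq P'$.

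The key observation is that $f$ processes the steps of $m$ in decreasing order of step number $n, n-1, \ldots, 1$, so at the instant $i$ is inserted, every element currently in the cyclic permutation is strictly greater than $i$. Thus the two immediate neighbors $N_L$ and $N_R$ of $i$ right after insertion both exceed $i$. To show $i$ is not a pinnacle of $[\pi']$, it therefore suffices to produce a single neighbor of $i$ that persists to the end of the algorithm and still exceeds $i$.

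The central claim is that the neighbor of $i$ on the side opposite to $i$'s own label, namely $N_L$ if $i$'s label is right and $N_R$ if $i$'s label is left, remains $i$'s neighbor for the entire rest of the algorithm. I would prove this by induction on the number of subsequent insertions into the intermediate set containing $i$, maintaining the invariant that each such insertion places its new element strictly on $i$'s label side of $i$. The inductive step splits into cases based on whether the current smallest $s$ of $i$'s intermediate set is $i$ or lies strictly on $i$'s label side, on the label of $s$, and on whether the inserted element $j$ belongs to $P$; in each case a direct inspection of the insertion rule shows that $j$ is placed adjacent to $s$ on $s$'s label side, and this position always ends up on $i$'s label side of $i$. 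When $j \in P$ and thus becomes a new boundary, the resulting split of the intermediate set leaves $i$'s opposite-side neighbor still in the same subinterval as $i$ (since $j$ lies on $i$'s label side), with smallest element reverting to $i$ or still on $i$'s label side, so the invariant persists.

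The main technical obstacle is the bookkeeping when $j \in P$: I must verify that upon splitting, the smaller-than-$i$ elements previously present in the set end up in the subinterval not containing $i$, so that the smallest of $i$'s new subinterval is again $i$ or lies on $i$'s label side. Once this is verified, the opposite-side neighbor of $i$ is a permanent neighbor larger than $i$, and hence $i$ is not a pinnacle of $[\pi']$, completing the proof.
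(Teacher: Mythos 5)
Your proposal is correct and follows essentially the same route as the paper: since elements are inserted in decreasing order, each non-pinnacle $i$ is a vale upon insertion, and the insertion rule (always placing new elements adjacent to the current smallest on its label side) guarantees that the neighbor of $i$ opposite to $i$'s label is never displaced, so $i$ keeps a larger neighbor and cannot become a pinnacle. The paper simply asserts this persistence as immediate, whereas you spell out the induction (including the splitting case when an element of $P'$ lands in $i$'s intermediate set); note only that in that case some smaller elements may remain in $i$'s subinterval, but, as you correctly state, all that matters is that the new smallest is $i$ or lies on $i$'s label side.
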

\begin{proof}
Consider some element $i \not \in P'$. Since all elements become vales when initially placed, it is enough to show that $i$ remains next to to one of its larger adjacent elements through the end of the algorithm. But this is immediate because if $i$ is labeled as left, then no future element will ever be inserted directly to the right of $i$, and similarly for a right label. Since no element is ever removed, we have that $i$ will remain next to a larger element and therefore fail to be a pinnacle.
\end{proof}

\begin{lem}
\label{labeling 2 for f}
In the final result, all elements $i \not \in P'$ that are also not vales will be in the middle of a decreasing sequence of three elements if $i$ has a right label, and in the middle of an increasing sequence of three elements if $i$ has a left label.
\end{lem}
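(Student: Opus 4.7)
My plan is to trace how the immediate left and right neighbors of a non-$P'$ element $i$ evolve through the remainder of the algorithm after $i$ is inserted. The core structural observation I would exploit is that the algorithm inserts elements in strictly decreasing order of value (step numbers $n, n-1, \ldots, 1$, after initially placing $p_0 = n+1$), so at the moment $i$ is placed, $i$ is the smallest element in its intermediate set, and every later insertion has value strictly less than $i$.

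I would handle the case when $i$ has label $r$, invoking left/right symmetry for the label $l$ case. First I would show that $i$'s left neighbor is locked in at the instant of insertion and has value greater than $i$. The value bound is immediate, since $i$'s left neighbor at that moment is either a boundary of the intermediate set (an element of $P'$) or a previously placed element of the set, either of which was inserted before $i$ and therefore exceeds $i$. To see that this neighbor never changes, I would induct on the order of the subsequent insertions that fall into the intermediate set containing $i$: the very next such insertion lands adjacent to $i$ on $i$'s right, because $i$ is the current smallest of the set and has label $r$; this new element becomes the new smallest and sits strictly to $i$'s right, so each further insertion, being placed adjacent to the current smallest on its label side, also lands on $i$'s right. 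Consequently nothing is ever inserted between $i$ and its original left neighbor.

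Next I would analyze $i$'s right neighbor. At insertion it too is larger than $i$. If no subsequent insertion ever falls into the intermediate set containing $i$, both of $i$'s neighbors remain larger than $i$ and $i$ is a cyclic vale, which is excluded by hypothesis. Otherwise, by the argument above, the first such insertion lands immediately to $i$'s right and, being later than $i$, has smaller value; every further insertion to $i$'s right then has still smaller value, so the right neighbor remains less than $i$ for the rest of the algorithm. Combining the two analyses, the final configuration has $i$'s left neighbor larger than $i$ and right neighbor smaller than $i$, placing $i$ in the middle of a decreasing triple of consecutive elements.

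The main technical subtlety, and the step I expect to require the most care, is handling later insertions from $P'$ into the intermediate set containing $i$, since such a pinnacle splits the set and renumbers the remaining available intermediate sets, which could in principle threaten the inductive invariant. I would resolve this by observing that the insertion rule for $P'$ elements is identical to that for non-$P'$ elements (adjacent to the current smallest on its label side), so any splitting pinnacle also lands strictly to $i$'s right; after the split, $i$ remains in the sub-arc to the left of that pinnacle, and the same induction continues unchanged within the sub-arc.
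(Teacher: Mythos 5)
Your proof is correct and rests on the same key observation as the paper's: once $i$ is inserted it is the smallest element present, and its label dictates that every later element placed adjacent to $i$ lands on the labelled side, so the opposite-side neighbor is frozen and is larger than $i$. The only difference is cosmetic --- the paper finishes by elimination (citing Lemma~\ref{pinnacles for f} and the non-vale hypothesis to rule out all but one case), whereas you directly verify that the labelled-side neighbor ends up smaller --- so the two arguments are essentially the same.
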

\begin{proof}
First, since $i$ cannot be a vale by assumption and since it cannot be a pinnacle by Lemma \ref{pinnacles for f}, the only remaining possibilities are that it is in the middle of a decreasing or increasing sequence. If such an element $i$ has a left label, then the algorithm forces all future elements placed adjacent to $i$ to be to its left. Therefore, $i$ will always remain adjacent to the value initially on its right, which is larger than $i$. This makes it impossible for $i$ to end up in the middle of a decreasing sequence, and so it must be in the middle of an increasing one. A similar proof holds for when $i$ has a right label.
\end{proof}

\begin{lem}
\label{Lemma for f}
At any step of the algorithm, if $p_i$ was the last element of $P'$ to be placed, then there will be $r_i+1$ available intermediate sets for the next step.
\end{lem}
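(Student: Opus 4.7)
The plan is to prove this lemma by induction on $i$, the index of the most recently placed element of $P'$. The key observations are that (a) the label of a $P'$-element determines whether the two intermediate sets it creates upon insertion are available or unavailable, and (b) insertions of non-$P$ elements (horizontal steps) never alter the count of available intermediate sets, since such insertions occur strictly inside an existing intermediate set and never change its $P'$-boundaries.

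For the base case $i=0$, only $p_0 = n+1$ has been placed, with a right label by convention. There is exactly one intermediate set (bounded on both sides by $p_0$), and it is available, matching $r_0 + 1 = 0 + 1 = 1$. For the inductive step, assume that immediately after placing $p_{i-1}$ there are $r_{i-1}+1$ available intermediate sets. Between the placement of $p_{i-1}$ and that of $p_i$, the algorithm processes only horizontal steps; since each such step inserts an element into the interior of some intermediate set without altering its $P'$-boundaries, the number of available intermediate sets just before we place $p_i$ is still $r_{i-1}+1$.

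Now I would split into two cases based on the type of the $p_i$ step in the Motzkin walk. If $p_i$ corresponds to an up step, then it is labeled right and $r_i = r_{i-1}+1$; inserting $p_i$ into an available intermediate set splits it into two new intermediate sets, each bounded by $p_i$ (right) and a previously available right-labeled boundary, so both are available. The count therefore increases by $1$, giving $r_{i-1}+2 = r_i+1$. If $p_i$ corresponds to a down step, then it is labeled left and $r_i = r_{i-1}-1$; inserting $p_i$ into an available intermediate set again splits it in two, but now both new intermediate sets are bounded by $p_i$'s left label and so are unavailable. The count decreases by $1$, giving $r_{i-1} = r_i+1$, as required. (Note that the case $r_{i-1}=0$ with a down step can only occur on the final step of the walk, which corresponds to the modified Dyck walk ending at height $-1$; here we correctly obtain $0 = r_k+1$ available intermediate sets.)

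The main obstacle, and the point requiring the most care, is verifying that horizontal step insertions genuinely preserve the available-set count. The subtlety is that even though such an insertion may subdivide an intermediate set "visually," the notion of intermediate set used here is defined with respect to consecutive elements of $P'$ only; since the newly inserted element is not in $P'$, the relevant $P'$-boundaries are unchanged, and hence availability is unchanged. Once this is granted, the inductive argument reduces to the two-case check above, which exactly mirrors the up/down step structure generating the heights $r_i$.
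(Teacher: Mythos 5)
Your proof is correct and follows essentially the same inductive argument as the paper: base case with one available set at $r_0=0$, horizontal steps leaving the count unchanged, and up/down steps changing the available count by $+1$ or $-1$ in lockstep with $r_i$. Your treatment is somewhat more explicit than the paper's (in particular the observation that a left-labeled $p_i$ splits one available set into two unavailable ones, and the remark on the final step reaching height $-1$), but the underlying approach is identical.
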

\begin{proof}
Suppose that element $i$ was just placed in $[\pi']$. If $i \in P'$ and corresponded to an up step, then $i$ will have a right label and the net effect on the permutation will be to replace the intermediate set it landed in with two new ones on either side, resulting in an increase of one available intermediate set. If $i\in P'$ corresponded to a down step however, it will have a left label and therefore make the intermediate set it lands in unavailable, for a net loss of one intermediate set. Finally, if $i \not \in P'$, then $i$ cannot change the number of intermediate sets since they are completely determined by the positions of the elements in $P'$. This, combined with the fact that at the start of the process there is always one available intermediate set and that $r_0 = 0$, is enough to show the result by induction.
\end{proof}

\begin{prop}
The algorithm defined by $f$ is well defined. In particular it always terminates,  and outputs a permutation in $V_n(P)$.
\end{prop}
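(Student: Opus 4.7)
The plan is to verify in turn that (i) the algorithm terminates, (ii) every insertion is well-defined, and (iii) the resulting labeled cyclic permutation lies in $V_n(P)$.

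Termination is immediate: the walk $m$ has exactly $n$ steps, and each iteration removes one step, so the algorithm halts after $n$ iterations with an empty walk.

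For well-definedness, the only concern is that when we read a step labeled $h_x$, the $h$-th available intermediate set actually exists. Let $p_j$ denote the most recently placed element of $P'$, with the convention that $p_0 = n+1$ is placed at initialization and $r_0 = 0$. Because horizontal steps do not alter the height of the underlying walk in $R_k$, the leftmost remaining step begins at height $r_j$, and by the definition of $M_n(P)$ its height label satisfies $1 \leq h \leq r_j + 1$. Lemma~\ref{Lemma for f} guarantees that at this moment there are exactly $r_j + 1$ available intermediate sets, so the $h$-th such set exists and the insertion is unambiguous. The only mild edge case is the final step of $m$, which may drop to height $-1$; this corresponds to inserting the smallest element of $P$ with label $1_l$ into the unique available intermediate set, which is clearly well-defined.

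To see that the output lies in $V_n(P)$, we check each clause of the definition in turn. First, the algorithm begins with the length-one cyclic permutation consisting of $n+1$ and inserts each element of $[n]$ exactly once, so the output is a cyclic permutation in $[S_{n+1}]$. Second, $\cPin[\pi'] \subseteq P'$: by Lemma~\ref{pinnacles for f} no element of $[n] \setminus P$ can be a pinnacle of the output, and $n+1$ is automatically a pinnacle since it is the maximum, so the cyclic pinnacle set equals $Q' = Q \cup \{n+1\}$ for some $Q \subseteq P$. Third, the labeling satisfies conditions (1)-(4): condition (1) is vacuous since vales may carry either label; condition (2) follows from Lemma~\ref{labeling 1 for f} together with the fact that $p_0 = n+1$ is assigned a right label at initialization and is never touched by a later insertion; and conditions (3) and (4) are exactly the content of Lemma~\ref{labeling 2 for f}.

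The main obstacle is tracking how the number of available intermediate sets evolves under insertions corresponding to up, down, and horizontal steps with their various label choices, since this bookkeeping is what underlies the well-definedness argument. However, that work has already been carried out in Lemma~\ref{Lemma for f}, so once that lemma is invoked the remaining verification is a direct assembly of Lemmas~\ref{labeling 1 for f}, \ref{pinnacles for f}, and \ref{labeling 2 for f}.
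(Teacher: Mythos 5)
Your proof is correct and follows essentially the same route as the paper: termination plus an appeal to Lemmas~\ref{Lemma for f}, \ref{pinnacles for f}, \ref{labeling 1 for f}, and \ref{labeling 2 for f}. If anything, your version is slightly more careful than the paper's, since you check that the $h$-th available intermediate set exists (using $h \leq r_j+1$ together with Lemma~\ref{Lemma for f}) rather than only that at least one is available.
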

\begin{proof}
To prove that $f$ terminates, note that there will always be at least one available intermediate set in which to place elements because every $r_i \geq 0$, for $0 \leq i <k$ and so by Lemma \ref{Lemma for f} there will always be at least one intermediate set at each step. Lemma \ref{pinnacles for f} shows that the pinnacles will be a subset of $P'$, and Lemmas \ref{labeling 1 for f} and \ref{labeling 2 for f} together show that those elements which are not cyclic vales will be marked as specified by the set $V_n(P)$. Finally, since the algorithm marks all cyclic vales, we have that $f$ is well-defined. 
\end{proof}

We now construct the inverse $g = f^{-1}$ by reversing each step of the algorithm for $f$. 

\begin{enumerate}
    \item Start with a permutation $[\pi'] \in V_n(P)$. We will build a walk $m\in M_n(P)$ from right to left with steps numbered $1,2,\ldots,n$. To build step number $i$, we remove the element $i$ from $[\pi']$ in the following way.
    
    \item Locate the smallest element, $i$, of $[\pi']$, and remove it to form $[\sigma]$. Now in $[\sigma]$ take note of the intermediate set, $S$, from which $i$ was removed.  Count how many available intermediate sets $S$ is clockwise from $p_0$ and call this number $h$. Then add a step to the walk $m$ with label $h_x$ where $x$ corresponds to whether the element was labeled left or right. 
    
    The step is placed so that it begins at height $h-1$ where $h$ is the number of available intermediate sets in $[\pi']$ after $i$ is removed. The slope of the new step is determined as follows. 
        \begin{enumerate}
            \item If $i \not \in P$, the step will be a horizontal step.
            \item If $i\in P$ and $i$ has a left label, the step will be a down step.
            \item If $i\in P$ and $i$ has a right label, the step will be an up step.
        \end{enumerate}
        
    \item Label $[\sigma]$ as $[\pi']$ and continue with the above process until $[\pi']$ consists of only $p_0$. The resulting walk is the final output of the map.
\end{enumerate}

We must prove that this map is well-defined. 
\begin{lem}
At every step of the process, the pinnacles of $[\pi']$ will always be in $P'$, regardless of how many elements have been removed.
\end{lem}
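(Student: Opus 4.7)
The plan is to prove this invariant by induction on the number of elements that have been removed from $[\pi']$. The base case is the initial $[\pi'] \in V_n(P)$, whose pinnacle set is contained in $P'$ by the definition of $V_n(P)$ combined with Lemma \ref{pinnacles for f}. For the inductive step, it suffices to establish the following key observation: deleting the smallest element of any cyclic permutation cannot create a new pinnacle. This immediately implies that if the pinnacle set lies in $P'$ before a deletion, it continues to lie in $P'$ afterward, since the set can only lose elements.

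To verify the key observation, let $i$ be the smallest element currently in $[\pi']$, and let $a$ and $b$ be its two cyclic neighbors. For any element $c \notin \{a,b,i\}$, the two cyclic neighbors of $c$ are unaffected by the deletion of $i$, so whether $c$ is a pinnacle is unchanged. Since $i$ itself is deleted, only $a$ and $b$ remain to be analyzed. Consider $a$, and write $a'$ for its other cyclic neighbor, so the relevant segment reads $a', a, i, b$ before deletion and $a', a, b$ afterward. Before deletion, $a$ is a pinnacle if and only if $a > a'$ and $a > i$, and the latter inequality holds automatically since $i$ is the minimum. After deletion, $a$ is a pinnacle if and only if $a > a'$ and $a > b$. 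In particular, any post-deletion pinnacle at $a$ still satisfies $a > a'$ and hence was already a pinnacle before deletion. The symmetric argument applies to $b$.

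Combining these observations, the set of pinnacles can only shrink with each removal, so by induction it remains contained in $P'$ throughout the algorithm. I anticipate no serious obstacle beyond carrying out the neighbor analysis carefully in the cyclic setting; the degenerate cases when only two or three elements remain have no pinnacles and therefore trivially satisfy the invariant.
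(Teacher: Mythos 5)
Your proposal is correct and follows the same route as the paper: the paper's proof is precisely the observation that the initial permutation has pinnacles in $P'$ by the definition of $V_n(P)$ and that removing the smallest element can never create a new pinnacle, which you have simply spelled out via the neighbor analysis. The only minor quibble is that the base case needs nothing beyond the definition of $V_n(P)$, so the appeal to Lemma \ref{pinnacles for f} there is unnecessary.
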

\begin{proof}
To see this, note that $[\pi']$ starts with all pinnacles in $P'$ by assumption, and no non-pinnacle can ever become a pinnacle since we always remove the smallest element.
\end{proof}

\begin{lem}
\label{remove element g}
At every step where $i$ is removed from $[\pi']$ to form $[\sigma]$, the intermediate set in $[\sigma]$ from which $i$ was removed will be available.
\end{lem}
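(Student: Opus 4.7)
The plan is to argue by contradiction. Let $[\pi']_0$ denote the initial element of $V_n(P)$, so that the labels used throughout the algorithm are those assigned in $[\pi']_0$. Suppose that when $i$ is removed from the current $[\pi']$ to form $[\sigma]$, the resulting intermediate set $S$ in $[\sigma]$ is bounded by $P'$-elements $p_1, p_2$, and assume without loss of generality that $p_1$ carries a left label. I will derive a contradiction by tracking what that left label forces about the values of nearby elements in $[\pi']_0$.

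The crux is the following propagation claim: if $q \in P'$ carries a left label in $[\pi']_0$, then starting from $q$ and moving in either direction until the next element of $P'$ is reached, every element encountered (including that next $P'$ element) is strictly greater than $q$. The base case is that $q$'s two immediate neighbors exceed $q$, which follows from the contrapositive of rule 2: a left-labeled element of $P'$ must be a cyclic vale of $[\pi']_0$. For the inductive step along the chosen direction, let $a_k$ be the current element and assume $a_{k-1} < a_k$, where $a_{k-1}$ is either $q$ or the previously visited element. If $a_k \in P'$ we are done. Otherwise $a_k$ is not a pinnacle of $[\pi']_0$ since $\cPin[\pi']_0 \subseteq P'$, and $a_k$ is not a cyclic vale either because $a_{k-1} < a_k$ precludes it from being smaller than both neighbors. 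Hence $a_k$ sits in the middle of a strict monotone triple $a_{k-1}, a_k, a_{k+1}$, which by $a_{k-1} < a_k$ must be increasing, giving $a_{k+1} > a_k > q$ and letting the induction proceed.

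With the claim in hand, let $q_1 \in P'$ be the first element of $P'$ reached in $[\pi']_0$ when starting at $p_1$ and traversing toward $p_2$; the claim yields $q_1 > p_1$, and every non-$P'$ element strictly between $p_1$ and $q_1$ in $[\pi']_0$ is also $> p_1$. Observe that $p_1 > i$ because $p_1$ remains in the current $[\pi']$ while $i$ is its smallest element. If $q_1 = p_2$, then $[\pi']_0$ contains no $P'$ element strictly between $p_1$ and $p_2$, so $i \notin P'$ and $i$ lies in the intermediate set between $p_1$ and $p_2$ already in $[\pi']_0$; the claim then forces $i > p_1$, contradicting $p_1 > i$. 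If instead $q_1 \neq p_2$, then $q_1$ is either $i$ itself (when $i \in P'$) or an element of $P'$ already removed before $i$; in both cases $q_1 \leq i < p_1$, contradicting $q_1 > p_1$. A symmetric argument shows that $p_2$ cannot have a left label either, so $S$ is available.

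The main obstacle I expect is formulating and proving the propagation claim cleanly, since one must invoke rules 1--4 alongside the hypothesis $\cPin[\pi']_0 \subseteq P'$ to rule out both the pinnacle and cyclic-vale options for each intermediate $a_k$. After that, the comparison of $P'$-element positions between $[\pi']_0$ and the current $[\pi']$ handles the cases $i \in P'$ and $i \notin P'$ uniformly, since in both cases $p_1$ and $p_2$ are simply the nearest surviving $P'$ elements to the position of $i$.
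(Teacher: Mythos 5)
Your proof is correct and follows the same strategy as the paper's: a left-labeled element of $P'$ must be a cyclic vale of the original permutation, so values increase monotonically away from it until the next $P'$ element is reached, forcing $i$ to exceed the bounding element and contradicting the fact that elements are removed smallest-first. Your propagation claim and the case analysis on $q_1$ simply make explicit a step the paper states tersely (in particular, the possibility that $P'$ elements originally lying between $p_1$ and $p_2$ were already removed), so this is a more carefully written version of the same argument.
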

\begin{proof}
Suppose the intermediate set in $[\sigma]$ from which $i$ was removed was unavailable in $[\sigma]$, which would force one of the two bounding elements in $P'$ to have a left label. Now any element $p \in P'$ with a left label is a cyclic vale at the start of the algorithm, and since there are no other elements in $P'$ between $p$ and $i$ to be pinnacles, it must be that $i>p$. But this contradicts the assumption that the smallest element is being removed. Therefore, $i$ must have been removed from an available intermediate set.
\end{proof}

\begin{lem}
\label{Lemma for g}
The algorithm $g$ will produce a connected walk.
\end{lem}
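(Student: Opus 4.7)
The plan is to track the number of available intermediate sets through the removal process and show that the heights of consecutive steps match up. Write $a_i$ for the number of available intermediate sets in $[\pi']$ after elements $1, 2, \ldots, i$ have been removed, so by the rule of $g$, step $i$ begins at height $a_i - 1$. Connectedness amounts to verifying, for $k = 2, \ldots, n$, that the endpoint of step $k$ agrees with the starting point of step $k-1$, equivalently that $a_{k-1} - a_k$ equals the slope of step $k$ ($+1$ for up, $0$ for horizontal, $-1$ for down). I also need to confirm the base condition that step $n$, the leftmost step, starts at height $0$.

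For the base case, after all of $1, \ldots, n$ are removed only $p_0 = n+1$ remains, and the single surrounding intermediate set is available since $p_0$ is assumed to carry a right label; thus $a_n = 1$ and step $n$ begins at height $0$ as required.

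For the inductive step I would split on whether $k \in P'$. If $k \notin P'$, step $k$ is horizontal by definition and removing $k$ changes neither the $P'$-elements present nor their labels, so no intermediate set is affected in structure and $a_{k-1} = a_k$. If $k \in P'$, the two intermediate sets $A, B$ flanking $k$ merge into a single set $C$ upon removal, and the problem reduces to tracking the availabilities of $A$, $B$, $C$ in terms of the labels of $k$ and of the $P'$-neighbours $x, y$ of $k$. The key input is Lemma~\ref{remove element g}, which guarantees that $C$ is available, hence both $x$ and $y$ have right labels. If $k$ has a right label then $A$ and $B$ are automatically available too, so the removal destroys two available sets and creates one — net change $-1$, matching slope $+1$. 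If $k$ has a left label then neither $A$ nor $B$ was available (each requires $k$'s label to be right), so the removal destroys zero and creates one — net change $+1$, matching slope $-1$.

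The main obstacle is organising this case analysis for $k \in P'$ cleanly and applying Lemma~\ref{remove element g} at the right moment. The degenerate situation in which $x = y = p_0$, i.e.\ only two $P'$-elements remain before the removal, slots into exactly the same analysis because $p_0$ always carries a right label, so the merged set $C$ bounded by $p_0$ on both sides is automatically available.
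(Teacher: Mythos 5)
Your proposal is correct and follows essentially the same route as the paper's proof: a case split on whether the removed element lies in $P'$ and, if so, on its left/right label, with Lemma~\ref{remove element g} supplying the availability of the merged intermediate set (and hence the right labels of the flanking $P'$-neighbours) needed to count the net change in available sets and match it to the step's slope. Your explicit treatment of the base height and of the degenerate case $x=y=p_0$ is slightly more careful than the paper's, but the substance is identical.
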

\begin{proof}
Consider any step, $m_i$ formed by removing element $i$ from $[\pi']$ and let $m_{i-1}$ be the step placed just before that (when $i-1$ was removed from $[\pi']$). Then by definition of $g$, if $r_i$ and $r_{i-1}$ are the starting heights of steps $m_i$ and $m_{i-1}$ respectively, we have that $r_i +1$ must be the number of available intermediate sets in $[\pi']$ right after $i$ was removed while $r_{i-1} + 1$ was the number of available intermediate sets in $[\pi']$ right before $i$ was removed. We now have three cases.

Case 1: Suppose that $i$ was not in $P'$. Then the number of available intermediate sets will be unchanged once $i$ is removed, and since $g$ will cause $m_i$ to be horizontal in this case, we have that $m_i$ will end at height $r_{i-1},$ thereby connecting it to the previous step.

Case 2: Suppose that $i\in P'$ and $i$ had a left label. Then the intermediate set it occupied will become available by Lemma \ref{remove element g} and there will be a net increase of one available intermediate set. Since in this case $g$ causes $m_i$ to have downward slope, we have that $m_i$ will end at height $r_{i-1},$ thereby connecting it to the previous step.

Case 3: Suppose that $i \in P'$ and $i$ had a right label. Then the intermediate sets on either side of $i$ must be available or else Lemma \ref{remove element g} will be contradicted when $i$ is removed. Removing $i$ will then join these available intermediate sets into one available set, and there will be a loss of one available intermediate set. Since in this case $g$ causes $m_i$ to have upward slope, we have that $m_i$ will end at height $r_{i-1},$ thereby connecting it to the previous step.

Therefore, since in every case we have that the newly placed step connects to the previous, we will have that the final walk will be connected. 
\end{proof}

\begin{prop}
The walk produced by the algorithm is in $M_n(P)$.
\end{prop}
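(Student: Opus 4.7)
The plan is to verify each defining requirement of $M_n(P)$ for the walk output by $g$. Connectivity of the walk is already handled by Lemma \ref{Lemma for g}, so it remains to check: (a) each step has the correct type (up/down/horizontal) based on whether $i \in P$; (b) the subscripts on the step labels respect the constraints that down steps must carry subscript $l$ and up steps subscript $r$; (c) every step begins weakly above the $x$-axis; and (d) the walk begins at $(0,0)$.

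Items (a) and (b) follow essentially by construction. When $i \notin P$ the algorithm draws a horizontal step whose $l$ or $r$ subscript just copies $i$'s label in $[\pi']$; when $i \in P$ has a left label, $g$ produces a down step with subscript $l$; and when $i \in P$ has a right label, $g$ produces an up step with subscript $r$. For the numerical part of each label, the algorithm sets it equal to the clockwise position of the intermediate set $S$ in $[\sigma]$ among the currently available intermediate sets, so it lies in $\{1,2,\ldots,a\}$, where $a$ denotes the total number of available intermediate sets in $[\sigma]$. Since $g$ sets the starting height of the step to be $a-1$, the numerical label lies in $\{1,\ldots,a\}$, which is precisely $[\,\text{start}+1\,]$, as $M_n(P)$ demands.

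For (c), Lemma \ref{remove element g} is the engine: the intermediate set in $[\sigma]$ from which $i$ was removed is always available, so $a \geq 1$ at every stage, and therefore the starting height $a-1$ of every step is non-negative. For (d), consider the leftmost step, which corresponds to removing $n$, the largest element of $[n]$; after this removal only $p_0$ remains in $[\sigma]$, giving a single (empty) intermediate set which is available because $p_0$ carries a right label by convention, so $a = 1$ and step $n$ starts at height $0$. Combined with Lemma \ref{Lemma for g}, this pins the origin of the walk at $(0,0)$. The only delicate ingredient is the non-negativity of the starting heights in (c), but Lemma \ref{remove element g} packages this away cleanly, so the proof reduces to assembling these observations.
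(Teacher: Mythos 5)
Your proposal is correct and follows essentially the same route as the paper: connectivity via Lemma \ref{Lemma for g}, the label range and step types by construction, non-negativity of starting heights via Lemma \ref{remove element g} (which guarantees at least one available intermediate set after each removal), and the origin at $(0,0)$ because only $[p_0]$ remains after the final removal. No gaps.
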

\begin{proof}
By definition, $g$ causes every step $i$ to have its label in $[r_i+1]$ because the numeric label for $i$ is chosen from the available intermediate sets after it is removed, which is $r_i+1$. Also, $g$ forces all slanted steps to have the required left/right label by definition. Additionally, the walk will be connected by Lemma \ref{Lemma for g} and also begin on the $x$-axis because there will be only one available intermediate set when the process is finished (because at the end, all that is left of the permutation is $[p_0]$). Finally, every step of the walk must start weakly above the $x$-axis because the only time there can be no available intermediate sets in $[\pi']$ is at the start of the algorithm, since removing an element always leaves behind an available intermediate set by Lemma \ref{remove element g}. This means that $r_i$ must be at least zero for all $i$ except possibly the last one, which forces all steps to begin weakly above the $x$-axis.

Therefore, the result $m\in M_n(P)$ and we have that the map is well-defined. 
\end{proof}

\begin{prop}
The maps $f$ and $g$ defined above are inverses.
\end{prop}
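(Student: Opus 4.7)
The plan is to show that $f$ and $g$ undo each other one step at a time, so that both $g\circ f = \mathrm{id}_{M_n(P)}$ and $f\circ g = \mathrm{id}_{V_n(P)}$. The fundamental observation is that the two algorithms are tied to the natural ordering on $[n]$: since the leftmost step of $m$ is numbered $n$ and $f$ processes steps left to right, $f$ inserts elements into $[\pi']$ in the decreasing order $n, n-1, \ldots, 1$, so the element just inserted is always the current minimum of $[\pi']$. Dually, $g$ removes elements in the increasing order $1, 2, \ldots, n$, so running $g$ immediately after $f$ peels off elements in the exact reverse of the order in which they were inserted.

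With this in mind, I would prove the identity $g\circ f = \mathrm{id}$ by induction on the number of elements inserted so far. The inductive hypothesis would be that after $f$ has processed the first $t$ steps (so the partial permutation contains $\{n+1, n, \ldots, n-t+1\}$) and $g$ has produced the corresponding $t$ steps in reverse, the remaining walk-to-be-produced agrees with the remaining walk-to-be-consumed. The inductive step reduces to checking, for a single pair of matching actions, that $g$'s output step equals the step $f$ consumed, which amounts to verifying three things: the slope of the step (determined identically in $f$ and $g$ by whether the element is in $P$ and, if so, by its left/right label); the left/right subscript (passed from the step to the element's label by $f$ and read back off by $g$); and the height label. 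The first two are immediate from the definitions. For the third, note that when $f$ inserts element $i$ into the $h$-th available intermediate set (counted clockwise from $p_0$), the state of $[\pi']$ right after this insertion is identical to the state right before $g$ removes $i$ (since $i$ is still the minimum), and $g$ reads off its height label as the clockwise position of the intermediate set that appears upon removal of $i$, which is precisely $h$.

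The other composition $f\circ g = \mathrm{id}$ follows by the symmetric argument: given $[\pi'] \in V_n(P)$, apply $g$ to produce a walk $m$, then apply $f$, which processes the leftmost (highest-numbered) step first. This leftmost step was the last one $g$ produced, hence corresponds to the last element $g$ removed, namely the largest element of $[n]$ that is not already a pinnacle of the starting permutation; reinserting this element into the intermediate set indicated by the height label recovers the original configuration at that stage of $g$, and then induction drives the rest. Because both $f$ and $g$ have already been shown to be well-defined by the preceding lemmas, no additional checking of admissibility of intermediate states is needed.

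The main obstacle is careful bookkeeping of how the \emph{available} intermediate sets evolve as an element of $P'$ with a left label becomes a cyclic vale (closing off its two adjacent intermediate sets) or as an element of $P'$ with a right label is inserted into an available intermediate set (splitting it in two, both available). One must check that in each of these cases, the local change to the count of available intermediate sets clockwise from $p_0$ matches the change in the height of the corresponding step of the Motzkin walk, as established in Lemma \ref{Lemma for f} for $f$ and in Lemma \ref{Lemma for g} for $g$. Once this bookkeeping is matched in the three cases (horizontal, up, down), the inductive argument closes and the maps are inverse bijections.
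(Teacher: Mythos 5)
Your overall strategy is the same as the paper's: factor $f$ and $g$ into single-element steps $f_i$, $g_i$ and show they undo each other state by state, checking slope, left/right subscript, and height/numeric label. Your treatment of the composition $g\circ f$ is essentially complete and matches the paper's argument.

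However, there is a concrete gap in the direction $f\circ g=\mathrm{id}$. Knowing that $f_i$ re-inserts $i$ into the \emph{correct intermediate set} with the correct left/right label is not enough; you must also show that $i$ lands in the correct \emph{position within} that intermediate set. Recall that $f$'s insertion rule for a non-empty intermediate set is to place $i$ adjacent to the current smallest element $v$ of that set, on the side given by $v$'s label. To see that this recovers the original configuration one must use the structure of $V_n(P)$: in the state before $g_i$ acts, $i$ is the vale of its intermediate set $I$, the non-pinnacles of $I$ form a decreasing run to the left of $i$ and an increasing run to its right, so $i$ is adjacent to the second smallest element $w$ of $I$; and by the labeling conventions of $V_n(P)$, $w$ carries a right label if it sits to the left of $i$ (middle of a decreasing triple) and a left label if it sits to the right. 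Hence after $i$ is removed, $w$ becomes the smallest element of $I$ and $f_i$'s rule places $i$ back on exactly the side it came from. This verification is the one place where the specific design of $f$'s placement rule is actually justified, and your proposal asserts the conclusion ("recovers the original configuration") without it. As a minor point, the last element removed by $g$ is simply $n$ (the algorithm strips everything in $[n]$, pinnacles included, until only $p_0=n+1$ remains), not "the largest element of $[n]$ that is not already a pinnacle."
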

\begin{proof}

First, given a walk $m\in M_n(P),$ we let $m_i$ be the step numbered $i$, that is, the $i$th step from the right in the original $m$ (note that if segments are deleted from $m$, we do not re-index or renumber the remaining segments). Let $f_i$ be the step in $f$ that deletes walk segment $m_i$ and adds element $i$ to $[\pi']$, and let $g_i$ be the step in $g$ that adds walk segment $m_i$ and removes element $i$ from $[\pi]$. Then we have that $f = f_1 \circ f_2 \circ \cdots \circ f_n$ and $g = g_n \circ g_{n-1} \circ \cdots \circ g_1$ by definition. 

For a given walk $m\in M_n(P)$ we define $S_i$ to be the state of the walk and permutation $[\pi']$ after applying $f_{i+1} \cdots \circ f_{n-1} \circ f_n$. Under this notation, we have that $S_n$ is the initial state consisting of the original walk $m$ and $[\pi'] = [n+1]$ and that $S_0$ is the final state consisting of the empty walk and the final permutation. We can then say that when applying $f$ to a walk, each $f_i$ will convert state $S_i$ into state $S_{i-1}$. Therefore, if we can show that $g_i$ applied to state $S_{i-1}$ gives state $S_i$, we will have that $g \circ f$ is the identity because the $g_i$ will move back through all the states step by step. 

Consider the walk and permutation at some state $S_{i}$. If we apply $f_i$ to this state, we will delete step $m_i$ with label $j_x$ and add element $i$ to $[\pi']$. Then $j$ will be the number of the intermediate set into which $i$ is inserted and the left/right label of $i$ in $\pi$ will be $x$. This will give us state $S_{i-1}.$ If we then apply $g_i$, we will remove $i$ and add step $m_i$ to walk $m$ with label $j_x$. 

Now we must show that the height and slope of $m_i$ are as they were in $S_i$. As was proven already in Lemma \ref{Lemma for f} the height of the start of step $m_i$ in $S_i$ was $h-1$ where $h$ is the number of available intermediate sets in $[\pi']$ in state $S_i$. But by definition of $g$ acting on state $S_{i-1}$, the height of the start of the step that $g$ places must be $h-1$ again since the number of intermediate sets after $i$ is removed from $[\pi']$ is the same as it was in $S_i$. Therefore, $m_i$ will be placed at the same height. To show that slope is preserved, we consider each of the three cases. If $m_i$ started as horizontal in $S_i$, then $i \not \in P$ because of the restrictions imposed on $M_n(P)$ and when we apply $g_i$ to state $S_{i-1}$ we will get a horizontal step back. If $m_i$ was an up step, we have by the restrictions imposed on $M_n(P)$ that $m_i\in P$ and that $m_i$ has a right label. Since the left/right label for $n$ in $[\pi']$ is the same as that for $m_i$, we have that $g_i$ will make $m_i$ an up step again. A similar proof shows that if $m_i$ was a down step before it was removed, it will be a down step again after it is returned. Therefore, $g_i$ undoes $f_i$.

Now for a given permutation $[\pi'] \in V_n(P)$ we define $S'_i$ to be the state of the walk and permutation $[\pi']$ after applying $g_{i} \cdots \circ g_{2} \circ g_1$. Under this notation, we have that $S'_n$ is the final state consisting of the final walk and permutation $[\pi'] = [n+1]$, and that $S'_0$ is the initial state consisting of the empty walk and the initial permutation. We can then say that when applying $g$ to a permutation, each $g_i$ will convert state $S'_{i-1}$ into state $S'_{i}$. Therefore, if we can show that $f_i$ applied to state $S'_{i}$ gives state $S'_{i-1}$, we will have that $f \circ g$ is the identity because the $f_i$ will move back through all the states step by step.

Consider the walk and permutation at some state $S_{i-1}$ and suppose we apply $g_i$ where we remove element $i$ with label $x$ from $[\pi']$ and add step $m_i$ to the walk. Then we will be in state $S'_i$ where the numeric label of $m_i$ will indicate the intermediate set, $I$, from which $i$ was removed and the left/right label will be $x$. If we then apply $f_i$, we will add $i$ back to the intermediate set corresponding to the label of $m_i$ with left/right label $x$, and so we have that $i$ will arrive back in $[\pi']$ in $I$ and have the same label as in state $S'_{i-1}$. 

We must now show that the position of $i$ in $[\pi']$ is as it was in $S'_{i-1}$. If $i$ was the only element in $I$ in state $S'_{i-1}$, it will arrive back in the same position in $[\pi']$ and we are done. If $i$ was not the only element, we note that since it was a vale and since all non-pinnacles in $I$ must form a decreasing sequence to the left of $i$ and an increasing sequence to the right of $i$ (or become pinnacles themselves), we have that $i$ was initially positioned next to the second smallest element in $I$. By the labeling conventions imposed on $[\pi']$, any element in $I$ to the left of $i$ will have a right label and any elements in $S$ to its right will have a left label. Therefore, no matter which of these two elements becomes the new vale after $i$ is removed, we are guaranteed that $f_i$ will put $i$ adjacent to this element again, and that it will be on the left if the element is labeled left and the right if it is labeled right. Therefore, $f_i$ undoes $g_i$.

Since $f$ and $g$ undo each other step by step, we have that they are inverses.
\end{proof}

Since the maps are inverses, we have that the sets are in bijection which finishes the proof of Theorem \ref{Main result 1}. We finish with a few remarks.

\begin{cor}
If we define $W_k$ to be the set of $y$ coordinate sequences of all walks using steps $[1,1]$ and $[1,-1]$ and ending at any height, then 
$$\sum_{Q \subseteq P} 2^{|Q|+1}|p_n(Q)| = 2^{n-k} \sum_{w\in W_k} \prod_{m=0}^k(w_m+1)^{p_m-p_{m+1}}$$

where we sum over $W_k$ instead of $R_k$.
\end{cor}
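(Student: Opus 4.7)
The plan is to deduce this from Theorem~\ref{Main result 1} by showing that enlarging the summation index set from $R_k$ to $W_k$ does not change the value of the inner sum. More precisely, the goal is to establish
\[
\sum_{w \in W_k} \prod_{m=0}^{k} (w_m+1)^{p_m - p_{m+1}}
\ =\
\sum_{r \in R_k} \prod_{m=0}^{k} (r_m+1)^{p_m - p_{m+1}},
\]
after which multiplying both sides by $2^{n-k}$ and invoking Theorem~\ref{Main result 1} yields the corollary.

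To establish this identity, the approach I would take is to show that every walk $w \in W_k \setminus R_k$ contributes $0$ to the left-hand sum. First I would characterize the complement: since $R_k$ consists of walks with $r_i \geq 0$ for $0 \leq i \leq k-1$ (the condition $r_k \geq -1$ is automatic from taking $\pm 1$ steps starting at a height of $0$), a walk in $W_k \setminus R_k$ must satisfy $w_i \leq -1$ for some $0 \leq i \leq k-1$. Because $w_0 = 0$ and consecutive values differ by $\pm 1$, a discrete intermediate value argument guarantees a smallest such index $i_0$, at which necessarily $w_{i_0} = -1$.

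The key step is then evaluating the factor indexed by $i_0$ in the product. It equals $(w_{i_0} + 1)^{p_{i_0} - p_{i_0+1}} = 0^{p_{i_0} - p_{i_0+1}}$. Since $i_0 \leq k-1$ and since $p_0 = n+1 > p_1 > \cdots > p_k$, the exponent $p_{i_0} - p_{i_0+1}$ is at least $1$, so this factor vanishes and so does the entire product. Hence the walks in $W_k \setminus R_k$ contribute nothing, giving the desired equality of sums. There is not a serious obstacle here; the only subtlety worth noting is the strict positivity of $p_{i_0} - p_{i_0+1}$, which uses that the single index where $p_m = p_{m+1}$ could occur is $m = k$ (when $p_k = 1 = p_{k+1}$), and that index is excluded by the bound $i_0 \leq k-1$.
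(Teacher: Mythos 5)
Your proposal is correct and follows essentially the same argument as the paper: walks in $W_k \setminus R_k$ must hit height $-1$ at some index $i < k$, where the factor $(w_i+1)^{p_i - p_{i+1}} = 0^{p_i-p_{i+1}}$ vanishes because $p_i > p_{i+1}$ strictly for $i \le k-1$. Your explicit note about why the exponent is positive (and why the index $k$, where $p_k = p_{k+1} = 1$ is possible, must be excluded) is a worthwhile clarification of a point the paper passes over quickly, but the substance is identical.
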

\begin{proof}
To see this, we note that if any walk $w\in W_k$ has a step that starts below the $x$-axis, then for some $i$ there will exist a $y$-coordinate of the walk, $w_i = -1$, for which $i \neq k$. This will cause $w_i+1 = 0$ and $p_i>p_{i+1}$ so that the weight of walk $w$ zeros out. Therefore, the sum reduces to summing over $R_k.$
\end{proof}

As a final comment, we note that if we require $P$ to be an admissible pinnacle set, Theorem \ref{Main result 1} reduces to Theorem 1.2 in \cite{Fang_2021}. This is because if $P$ is admissible, we know that $1 \not \in P$ which means the rightmost step of every walk in $M_n(P)$ will be horizontal. This prevents all walks from dipping below the $x$-axis, and we can safety restrict our definition of $R_k$ in Theorem \ref{Main result 1} to require that all $r_k$ are non-negative, which is the same set that Fang used. Dividing both sides of Theorem \ref{Main result 1} by two then gives Fang's formula.


\section{Recursion for Admissible Orderings}

Let $\omega$ be any ordering of the elements in some set $S \subseteq [n]$. Then for any permutation $\pi$ of a set of positive integers containing $S$, we say that $\ord (\pi) = \omega$ if the elements in $S$ appear in the same relative order in $\pi$ as they do in $\omega.$ We also wish to define the notion of standardizing to a set $S$. Given some permutation $\pi$ of distinct integers and a set $S$ of distinct integers such that $|S| = |\pi|$, we may use the unique order preserving map to take the elements of $\pi$ into $S$ to get a new permutation $\pi'$ with elements in $S$. In this case, we call $\pi'$ the standardization of $\pi$ to $S$. Note that if $S = [n]$ for $n$ the length of $\pi$, then this notion of standardizing coincides with the traditional one.

Now consider some $P = \{p_1 < p_2 < \cdots < p_k\} \subseteq [n]$ which need not be a pinnacle set (note the different indexing from the previous section). For the following theorems, we define an \emph{admissible ordering} of $P$ to be any ordering $\omega$ of the elements of $P$ such that there exists a $\pi \in S_n$ with $\Pin(\pi) = P$ and $\ord (\pi) = \omega$. If such a $\pi$ exists, we call it a \textit{witness} to the ordering. Under this definition, it makes sense to talk about the admissible orderings even when $P$ is not an admissible pinnacle set, since there will just be zero orderings. We will also define $N = \{n_1 <n_2< \ldots < n_{k+1}\}$ to be the set of the first $k+1$ positive integers that are not in $P$.

\begin{theorem}
Suppose $P = \{p_1<p_2< \ldots < p_k\}\subseteq [n]$ is an arbitrary subset of $[n]$. Let $N = \{n_1 <n_2< \ldots < n_{k+1}\}$ be the set of the first $k+1$ positive integers that are not in $P$. Then given a fixed ordering $\omega$ of $P$, there exists a $\pi\in S_n$ with $\Pin(\pi) = P$ and $\ord (\pi) = \omega$ 
if and only if there exists a permutation $\tau$ of the elements $P \cup N$ with $\Pin(\tau) = P$ and $\ord (\tau)  = \omega$.
\end{theorem}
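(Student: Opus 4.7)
The plan is to prove the two implications separately, starting with the forward direction.

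Given $\pi \in S_n$ with $\Pin(\pi) = P$ and $\ord(\pi) = \omega$, I would decompose $\pi$ as
\[
\pi = B_0 \cdot p_{\omega_1} \cdot B_1 \cdot p_{\omega_2} \cdots p_{\omega_k} \cdot B_k,
\]
where $B_0$ is the prefix preceding the first pinnacle, $B_k$ is the suffix following the last pinnacle, and each interior $B_j$ is the block of non-pinnacle entries between consecutive pinnacles. Each $B_j$ is non-empty: two adjacent pinnacles would contradict each other's definitions, and a pinnacle cannot occupy a boundary position. Set $v_j = \min B_j$. Since the entry of $B_j$ adjacent to each flanking pinnacle is strictly less than that pinnacle, each $v_j$ lies strictly below every pinnacle bordering $B_j$.

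Next, let $\sigma(j) \in \{1, \ldots, k+1\}$ be the rank of $v_j$ in the increasing sort of $v_0, \ldots, v_k$, and define
\[
\tau = n_{\sigma(0)} \cdot p_{\omega_1} \cdot n_{\sigma(1)} \cdot p_{\omega_2} \cdots p_{\omega_k} \cdot n_{\sigma(k)}.
\]
The $v_j$ are $k+1$ distinct elements of $[n] \setminus P$, so the $r$-th smallest of them is at least $n_r$, the $r$-th smallest positive integer outside $P$. This yields $n_{\sigma(j)} \leq v_j$, so each $n_{\sigma(j)}$ still lies strictly below the pinnacles flanking it in $\tau$. Hence every $p_{\omega_i}$ is a pinnacle of $\tau$ while every $n_{\sigma(j)}$ is a vale or a boundary, giving $\Pin(\tau) = P$ and $\ord(\tau) = \omega$.

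For the converse direction, suppose $\tau$ on $P \cup N$ satisfies $\Pin(\tau) = P$ and $\ord(\tau) = \omega$. I would insert the elements of $[n] \setminus (P \cup N)$---each of which strictly exceeds $n_{k+1}$---one at a time, preserving both the pinnacle set and the ordering of $P$ at every step. Split these elements into $\{m \leq p_k\}$ and $\{m > p_k\}$. For each $m \leq p_k$, processed in increasing order, let $p$ be the smallest element of $P$ with $p > m$ and insert $m$ immediately to the right of $p$. At that moment the right neighbor of $p$ is either an element of $N$ (hence at most $n_{k+1} < m$) or a previously inserted value smaller than $m$, so the descent under $p$ stays strictly decreasing; no existing pinnacle changes status, and $m$ is sandwiched between a larger and a smaller element, so it is not a pinnacle. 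For each $m > p_k$, again in increasing order, prepend $m$ to the current permutation; the element formerly at position $1$ then acquires the strictly larger left neighbor $m$ and cannot be a new pinnacle. After all insertions the result is the desired $\pi \in S_n$.

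The main obstacle is the forward direction, specifically choosing a valid single-element replacement for each block $B_j$. The key observation is the rank comparison $n_{\sigma(j)} \leq v_j$: because the $v_j$ live in $[n] \setminus P$, their $r$-th smallest is bounded below by $n_r$, and this is precisely what guarantees that the replacement still undercuts the surrounding pinnacles. The reverse direction is mostly algorithmic, but the split at $p_k$ is essential, since an element strictly larger than every pinnacle in $P$ must land at a boundary of the permutation to avoid becoming a pinnacle itself.
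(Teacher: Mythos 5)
Your proof is correct, and the forward direction is essentially the paper's argument: keep only the minimum of each of the $k+1$ non-pinnacle blocks and standardize those minima to $N$, with your rank inequality $n_{\sigma(j)} \le v_j$ making explicit the paper's claim that the values flanking each pinnacle can only decrease. For the converse the paper simply appends all of $[n]\setminus(P\cup N)$ to the end of $\tau$ in increasing order (the appended run is increasing, its last entry sits at the boundary, and the junction with $\tau$'s last entry, an element of $N$, is an ascent, so no pinnacle is created or destroyed); your case-split insertion scheme is also valid but more elaborate than necessary.
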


\begin{proof}
Let $\pi \in S_n$ be a witness to some admissible ordering of $P$. We will build up a permutation $\tau$ on the elements of $P \cup N$ which has $\Pin(\tau) = P$. An example of this process follows the proof. 

To start, let $\tau = \pi$ and identify each of the $k+1$ blocks of consecutive non-pinnacles in $\tau$. For each block, delete all but the smallest non-pinnacle. Finally, take the set of all $k+1$ remaining non-pinnacles in the result and standardize them to the set $N$ to get permutation $\tau$. Then $\tau$ will be a permutation of the elements of $P \cup N$ by definition with elements of $P$ in order $\omega$. We must also have that $\Pin (\tau) = P$ since every $p \in P$ started as a pinnacle in $\pi,$ and the elements on either side of $p$ could only decrease throughout the steps of this process. Since all other elements end up adjacent to a pinnacle, there can be no added pinnacles, and so we have $\tau$ as specified. 

To see the reverse direction, simply take any $\tau$ with a given ordering of the elements of $P$ as the pinnacle set and add the elements $[n]\setminus (P \cup N)$ in increasing order to the end of $\tau$. The result will be in $S_n$ and have pinnacle set $P$ in the same order as in $\tau$.
\end{proof}

As an example, suppose that $n=8, P = \{3,7\}$ and $\pi = 5\textbf{7}642\textbf{3}18.$
Then $N = \{1,2,4\}, \, \omega = 73,$ and we build up $\tau$ in the following steps:

$$\tau = 5\textbf{7}642\textbf{3}18$$
$$\tau = 5\textbf{7}2\textbf{3}1$$
$$\tau = 4\textbf{7}2\textbf{3}1$$

We may now state a corollary which will always allow us to reduce to the case where we only need to look for witnesses in $S_{2k+1}$

\begin{cor}
Given $P = \{p_1 < p_2 < \cdots < p_k\}$ and $N$ as above, standardize the set $P \cup N$ and let $P'$ and $N'$ be the sets that $P$ and $N$ standardize to, respectively. Then the number of admissible orderings of set $P$ for $\pi \in S_n$ is the same as the number of admissible orderings of set $P'$ for $\pi \in S_{2k+1}$.
\end{cor}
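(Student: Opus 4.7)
The plan is to combine the preceding theorem with an order-preserving standardization to reduce the ambient symmetric group from $S_n$ to $S_{2k+1}$. The essential point is that the notions of pinnacle and of relative ordering are both invariant under order-preserving bijections of the underlying alphabet, so witnesses transfer between the two universes without loss.

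First, I would apply the theorem just proved to $P \subseteq [n]$ to identify the admissible orderings of $P$ with those orderings $\omega$ of $P$ arising from some permutation $\tau$ of the $2k+1$ elements of $P \cup N$ satisfying $\Pin(\tau) = P$ and $\ord(\tau) = \omega$. I would then let $\phi : P \cup N \to [2k+1]$ be the unique order-preserving bijection; by the definition of $P'$ and $N'$, we have $\phi(P) = P'$ and $\phi(N) = N'$. Applying $\phi$ entry-wise to $\tau$ produces an element $\tau' \in S_{2k+1}$. Because $\phi$ preserves relative order, it preserves local maxima and relative positions, so $\Pin(\tau') = P'$ and $\ord(\tau') = \phi(\omega)$, and $\phi^{-1}$ shows this is a bijection at the level of witnesses.

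Next, I would check that the preceding theorem applies directly on the $[2k+1]$ side with $P'$ in place of $P$. The only thing to verify is that the $N'$ arising from standardization really equals ``the first $k+1$ positive integers not in $P'$.'' This is a short cardinality argument: $[2k+1] \setminus P'$ has exactly $k+1$ elements, all lying in $[2k+1]$, so they must be the first $k+1$ integers avoiding $P'$. Hence $P' \cup N' = [2k+1]$, and the theorem identifies the admissible orderings of $P'$ in $S_{2k+1}$ with those orderings $\omega'$ of $P'$ realizable by some $\tau' \in S_{2k+1}$ with $\Pin(\tau') = P'$ and $\ord(\tau') = \omega'$.

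Combining the two applications of the theorem with the bijection induced by $\phi$, and noting that $\omega \mapsto \phi(\omega)$ is itself a bijection between orderings of $P$ and orderings of $P'$, the admissible orderings of $P$ in $S_n$ biject onto the admissible orderings of $P'$ in $S_{2k+1}$. I do not anticipate a genuine obstacle; the only delicate checks are the invariance of pinnacle sets and relative orders under $\phi$, together with the cardinality identification of $N'$ with the first $k+1$ non-elements of $P'$.
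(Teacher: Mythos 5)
Your proposal is correct and follows exactly the route the paper takes (the paper's proof is the one-line instruction ``simply standardize the permutation $\tau$ given in the proof of the previous theorem''); you have merely spelled out the details of why standardization preserves pinnacles and relative order and why $N'$ consists of the first $k+1$ non-elements of $P'$. No issues.
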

\begin{proof}
Simply standardize the permutation $\tau$ given in the proof of the previous theorem.
\end{proof}

For example, if we standardize the the $\tau = 4\textbf{7}2\textbf{3}1$ we had at the end of the previous example, we would get $\tau = 4\textbf{5}2\textbf{3}1 \in S_5$ where now $P' = \{5,3\}$. 

In all that follows we will assume that 
\begin{equation}\label{Condition 1}
  P=\{p_1<p_2<\ldots<p_k\} \text{ and } n = 2k+1  
\end{equation}
since counting admissible orders when $n$ is larger can be reduced to this case. Note that because of this, specifying $P$ is enough to determine $N$ since $N = [2k + 1] \setminus P$.

Let $O(P)$ be the set of admissible orderings of set $P$, and $o(P) = |O(P)|$. More generally, if we let $N_i = \{n_1<n_2< \ldots < n_i\}$ be the $i$ smallest elements of $N$, we define 
$$
O_i(P) = \{\omega \in S_{P \cup N_i} \mid 
\text{there is $\pi \in S_{2k+1}$  with $\ord (\pi) = \omega$  and  $\Pin(\pi) = P$}\}
$$ 
where $S_{A}$ is the set of all permutations of the set $A$. We further define $o_i(P) = |O_i(P)|$ and note that $O_0(P) = O(P).$

Essentially, $O_i(P)$ ``keeps track'' of the positions of not only the elements of $P$, but of the positions of the $i$ smallest elements of $N$, too. As an example, consider the set $P = \{3,5\}$. Then $o(P) = 2$ since all orderings are admissible. However, $o_1(P) = 4$ since 
$$O_1(P) = \{135, 315, 513, 531\}$$
with corresponding witnesses
$$\{1\mathbf{3}2\mathbf{5}4, 2\mathbf{3}1\mathbf{5}4, 4\mathbf{5}1\mathbf{3}2, 4\mathbf{5}2\mathbf{3}1\}.$$
Note that the orderings $351$ and $153$ are not included however because there is no way to insert the elements $2$ and $4$ to get a permutation in $S_5$ having pinnacle set $P$.

In the following results, we will need to talk about witnesses to specific orderings in $O_i(P)$. The following lemma will allow us to make certain assumptions about the witness we wish to pick.

\begin{lem}\label{swap small elements}
Consider any $\omega \in O_i(P)$ where the conditions in (\ref{Condition 1}) hold and where $i<p_1$. Then if $\omega'$ is obtained by permuting the elements in $[i]$ in $\omega$ among their indices, then $\omega' \in O_i(P)$ also.
\end{lem}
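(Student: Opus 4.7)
The plan is to begin with a witness $\pi \in S_{2k+1}$ for $\omega$ and construct a witness $\pi'$ for $\omega'$ by rearranging only the values $[i]$ among the positions they currently occupy in $\pi$. Let $T \subseteq [2k+1]$ denote the set of positions of the elements of $[i]$ in $\pi$. Since $i < p_1$, no element of $[i]$ belongs to $P = \Pin(\pi)$, so every position in $T$ is a non-pinnacle position of $\pi$.

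The key structural observation comes from the standardization $n = 2k+1$. Any $\pi \in S_{2k+1}$ with exactly $k$ pinnacles must place these at a size-$k$ independent set among the interior positions $\{2,3,\ldots,2k\}$, i.e., at a size-$k$ independent set of the path on $2k-1$ vertices. A counting argument (each chosen vertex forces a gap, using up $k + (k-1) = 2k-1$ vertices in total) shows that this maximum independent set is unique and equal to $\{2, 4, \ldots, 2k\}$. Hence every witness has pinnacles exactly at the even positions and non-pinnacles exactly at the odd positions $\{1, 3, \ldots, 2k+1\}$. In particular, no two positions in $T$ are adjacent, and every interior non-pinnacle position has both neighbors at pinnacle positions holding elements of $P$.

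Given any bijection $\sigma : T \to [i]$ corresponding to the target rearrangement in $\omega'$, I would define $\pi'$ by $\pi'_t = \sigma(t)$ for $t \in T$ and $\pi'_t = \pi_t$ otherwise, and then verify $\Pin(\pi') = P$ in two parts. First, each $p \in P$ remains a pinnacle: it sits at the same pinnacle position $s$ in $\pi'$ as in $\pi$, while each neighbor $\pi'_{s \pm 1}$ is either unchanged from $\pi$ (hence $< p$ since $p$ was already a pinnacle there) or lies in $[i]$ (hence $\le i < p_1 \le p$). Second, no other position becomes a pinnacle: the boundary positions $1$ and $2k+1$ are automatically excluded, and for an interior non-pinnacle position $t$ both neighbors are at pinnacle positions and therefore hold elements of $P$, so $\pi'_{t \pm 1} \ge p_1$, which strictly exceeds $\pi'_t$ whether $\pi'_t \in [i]$ (so $\pi'_t \le i < p_1$) or $\pi'_t = \pi_t$ (in which case $\pi_t$ was already a vale of $\pi$ flanked by pinnacles of $\pi$).

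Because the positions of the $P$-elements are preserved and the positions of the $[i]$-elements still form the set $T$ with values merely permuted by $\sigma$, the ordering $\ord(\pi')$ is obtained from $\omega$ precisely by permuting the $[i]$-entries at their indices, and choosing $\sigma$ to realize the prescribed target produces $\pi'$ as a witness to $\omega'$. The main obstacle of the argument is recognizing the rigid positional structure forced by $n = 2k+1$; without the uniqueness of the maximum independent set in the path on $\{2, \ldots, 2k\}$, two positions in $T$ could potentially be adjacent, in which case rearranging $[i]$ among them could create a spurious pinnacle outside $P$ and the direct construction would fail.
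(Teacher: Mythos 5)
Your proof is correct and follows essentially the same route as the paper's: both rest on the fact that a witness in $S_{2k+1}$ with $k$ pinnacles must alternate pinnacles and non-pinnacles, so every element of $[i]$ is flanked by two elements of $P$ larger than itself, and permuting the values of $[i]$ among their positions can neither create nor destroy pinnacles. The only difference is that you explicitly justify the alternation via the uniqueness of the maximum independent set of the path on positions $\{2,\ldots,2k\}$, a fact the paper simply asserts.
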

\begin{proof}
Since $\omega$ is an admissible ordering, there must exist at least one witness to it which we will call $\pi$. Because $n = 2k+1$ we know that in $\pi$ the pinnacles and non-pinnacles, which are the elements in $P$ and not in $P$ respectively, alternate beginning and ending with a non-pinnacle. Therefore each element of $[i]$ will be adjacent to two pinnacles larger than itself, and since all values in $[i]$ are smaller than all pinnacles, this will still be the case after permuting the elements in $[i]$. Therefore, no elements in $[i]$ can become pinnacles. Additionally, no pinnacles can be lost because the only elements to move are those in $[i]$ and if an element of $[i]$ adjacent to a pinnacle gets swapped, it will just be replaced by a different value in $[i]$ that is still smaller than the pinnacle. Finally, we have that any element $j$ that is not in $P$ or $[i]$ cannot become a pinnacle because neither $j$ nor the two elements of $P$ initially adjacent to it will move during the swap. Therefore, no pinnacles are gained or lost, and so the resulting permutation will be a witness to the corresponding ordering $\omega'$, which shows it must be in $O_i(P)$.
\end{proof}

Given $P \neq\emptyset$ and corresponding $N$, consider the set $[2k+1]\setminus \{p_1, n_1\}$ and standardize so that the elements are in $[2k-1]$. Let $P'$ be the set that the elements in $P\setminus \{p_1\}$ get mapped to which will always be $\{p_2-2,p_3-2, \ldots p_k-2\}$ if $i < p_1$. Then define the \emph{reduction operator} $r(P) = P'$. 

We now wish to form a bijection between a subset of the elements in $O_i(P)$ and those in some $O_{i'}(P')$. 
To this end, for $j \in \{0,1,2\}$ we define 
$$O_i^{j}(P) = \{\omega \in O_i(P) \mid \text{$p_1$ is adjacent to $j$ elements in $[i]$} \}$$ and let $o_i^{j}(P) = |O_i^{j}(P)|$. 

We then have the following result.

\begin{lem} \label{ordering map}
Suppose $i \geq 0$, that the conditions in (\ref{Condition 1}) hold, and that $P$ is non-empty. Then if for some $j \in \{0,1,2\}$ we have that $i<p_1$ and $O_i^j(P)\neq\emptyset$ and we let $P' = r(P)$, then we have the following 
$$o_i^{j}(P) = 
 \begin{cases} 
      i(i-1) o_{i-1}(P')     &\text{if } j = 2,\\
       2i o_i(P')            &\text{if }  j = 1,\\
      o_{i+1}(P')            &\text{if }  j = 0.
\end{cases}
$$
\end{lem}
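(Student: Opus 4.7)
The plan is to build a bijection in each of the three cases $j \in \{0, 1, 2\}$, using Lemma~\ref{swap small elements} to permute the elements of $[i]$ freely within $\omega$; this accounts for the combinatorial factors on the right-hand side. Two preliminary observations will be used throughout. First, since $n = 2k+1$, pinnacles and non-pinnacles alternate in any witness $\pi$ of $\omega$, so $p_1$'s two $\pi$-neighbors are non-pinnacles and the index $j$ counts exactly those that also lie in $[i]$. Second, in any admissible $\omega' \in O_{i'}(P')$, no two elements of $N'_{i'}$ can be $\omega'$-adjacent: between any two non-pinnacles in a witness there must be a pinnacle, which lies in $P' \subseteq \omega'$ and hence appears between them in $\omega'$ as well.

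For $j = 2$, let $a$ (left) and $b$ (right) be the two $[i]$-elements adjacent to $p_1$ in $\omega$; these are also the $\pi$-neighbors of $p_1$. I send $\omega \mapsto (a, b, \omega')$, where $\omega'$ is obtained by deleting $p_1$ and $a$ from $\omega$ and standardizing ($p_m \mapsto p_m - 2$ for $m \ge 2$, and $[i] \setminus \{a\} \to [i-1]$ order-preservingly). A witness $\pi' \in S_{2k-1}$ is obtained by the same deletion from $\pi$ followed by standardization; the element $b$ becomes a vale between the pinnacles that previously flanked the block $(a, p_1, b)$, and no other pinnacle is gained or lost. The inverse inverts the relabeling and inserts the block ``$a\,p_1$'' immediately before the image of $b$. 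The factor $i(i-1)$ counts ordered pairs $(a, b) \in [i]^2$ with $a \ne b$.

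For $j = 1$, Lemma~\ref{swap small elements} reduces to the canonical situation in which the $[i]$-neighbor of $p_1$ is a fixed element of $[i]$ on a fixed side, contributing the factor $2i$. In the canonical case the map deletes $p_1$ from $\omega$ and relabels $p_m \mapsto p_m - 2$ while fixing $[i]$, producing an ordering of $P' \cup N'_i$. Admissibility of $\omega' \in O_i(P')$ is verified by removing $p_1$ from $\pi$ together with its non-$[i]$ $\pi$-neighbor $w$: this $w$ lies in $(i, p_1) \subseteq (i, p_2)$, so the standardization shifts the pinnacles by exactly two while fixing $[i]$, and the joint removal of $p_1$ and $w$ (which are $\pi$-adjacent) does not disturb any surviving pinnacle. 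The inverse reinserts $p_1$ adjacent to the chosen $[i]$-element on the chosen side, and the second preliminary observation forces the other neighbor of $p_1$ in $\omega$ to lie in $P \setminus \{p_1\}$. The case $j = 0$ is handled analogously: the map relabels $p_1 \mapsto i+1$, $p_m \mapsto p_m - 2$, and is the identity on $[i]$, producing $\omega' \in O_{i+1}(P')$. A witness is built from $\pi$ by deleting the two non-$[i]$ $\pi$-neighbors of $p_1$ (both in $(i, p_2)$), and the inverse is secured by the same adjacency observation, which forces $i+1$'s $\omega'$-neighbors to lie in $P'$.

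The main obstacle is reconciling the standardization implicit in the definition of $r(P)$, which removes $p_1$ and $n_1 = 1$, with the relabelings for $j \in \{0, 1\}$, which keep $[i]$ fixed rather than deleting $1$. This forces the auxiliary element(s) deleted from the witness $\pi$ alongside $p_1$ to be chosen from $(i, p_2) \setminus \{p_1\}$ instead of being $1$; showing that such a choice exists, that its removal leaves every surviving pinnacle intact, and that the resulting standardization yields exactly the asserted $\omega'$, is the technical heart of the proof. The hypothesis $O_i^j(P) \ne \emptyset$ is precisely what guarantees that the required non-pinnacles of $(i, p_2)$ are present.
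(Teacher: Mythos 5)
Your proposal is correct and follows essentially the same route as the paper: extract the factors $i(i-1)$, $2i$, $1$ via Lemma~\ref{swap small elements}, then set up deletion/standardization bijections with $O_{i-1}(P')$, $O_i(P')$, $O_{i+1}(P')$, building witnesses by removing $p_1$ together with auxiliary non-pinnacles drawn from $(i,p_1)$ whose existence is guaranteed by $O_i^j(P)\neq\emptyset$ (equivalently $i\geq j$ and $i+2-j<p_1$). The only differences are cosmetic normalizations (recording the ordered pair $(a,b)$ instead of canonicalizing to the factor $1p_12$, and sending $p_1\mapsto i+1$ rather than $p_1\mapsto 0$ followed by standardization), which agree with the paper's maps up to a permutation of the small elements permitted by Lemma~\ref{swap small elements}.
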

\begin{proof}
Note that since we have that $O_i^j(P)\neq\emptyset$, there must exist at least $j$ elements in $[i]$ and $2-j$ elements not in $[i]$ but less than $p_1$ to surround $p_1$. Therefore, we have the inequalities $i\geq j$ and $i+2-j < p_1$.

We will consider the case where $j=2$ in detail and note the changes that must occur for when $j=0,1$. Note first that by Lemma \ref{swap small elements} we have that the existence of an ordering $\omega \in O_i^{2}(P)$ in which $\omega$ contains the factor $xp_1y$ implies the existence of one containing the factor $1p_12$ and visa versa. If we let 
$$A = \{\omega\in O_i^2(P) \mid \text{$1p_12$ is a factor of $\omega$}\}$$
then we will have that 
$$
o_i^2(P) = i(i-1)\cdot |A|.
$$
where we have the factor of $i(i-1)$ because there are $i$ ways to choose an element in $[i]$ to the left of $p_1$ and $i-1$ ways to choose a a different element in $[i]$ to the right. We may now define a map between set $A$ and $O_{i-1}(P')$ in which we remove the factor $1p_1$ from $\omega$ and standardize to the set $P'\cup [i-1]$. To show this ordering has a witness, first consider any witness, $\pi$, to $\omega$ and note that $\pi$ must also contain the factor $1p_12$ since elements in $P$ must alternate with those that do not, and so no new elements can be inserted between $1,p_1,$ and $2$. If we then remove $1p_1$ from $\pi$ and standardize to get $\pi'$, we are left with a witness to the ordering $\omega'$. Note that the element to the left of $1$ in $\pi$, if it exists, must have originally a pinnacle, and since in $\pi'$ it will end up being adjacent to $1$ again, it must remain a pinnacle. Furthermore, since $2$ cannot become a pinnacle, and since the relative order of all other elements was preserved, we have that the pinnacle set of $\pi'$ is $P'$ which shows that $\omega'$ is admissible. 

To reverse this map, consider some $\omega' \in O_{i-1}(P')$ and standardize to the set $(P \cup [i]) \setminus \{p_1, 1\}$. Note that since $i\geq j=2$, 
we have that $\omega'$ will contain the element $1$, which will standardize to $2$. Replace $2$ with $1p_12$ to get the ordering $\omega$ which will be in $A$ if we can show it has a witness. To show this, consider any witness $\pi'$ to $\omega'$, standardize to $[2k+1]\setminus \{p_1, 1\}$, and replace $2$ with $1p_12$ to get $\pi$, which will be a witness to $\omega$. An argument similar to the one above shows that the pinnacle set will be $P$, and so we have that $\omega$ is admissible, completing the proof in this case. 

Since these maps are clearly inverses, we have that $|A| = o_{i-1}(P')$ which completes the case when $j=2$.

For the case where $j=1$, we can use a similar proof where we use Lemma \ref{swap small elements} to show that
$$
o_i^2(P) = 2i\cdot |\{\omega\in O_i^2(P) \mid \text{$1p_1$ is a factor of $\omega$}\}|.
$$
and then define a bijection to simply delete $p_1$ from $\omega$ and standardize to $P' \cup [i]$ to get $\omega'$. Care must be taken to prove this results in an ordering in $O_i(P')$. To prove it has has a witness, one can use the fact that $i+2-j<p_1$ to argue for the existence of a witness $\pi \in S_{2k+1}$ to $\omega$ which contains the factor $1p_1(p_1-1)$ where $p_1-1 \not \in [i]$. Replacing this factor with $1$ and standardizing will result in a witness for $\omega'$ with pinnacle set $P'$. The fact that $i+2-j<p_1$, together with the fact that $i \geq j$, must be used again when showing that the reverse of this map is well-defined. 

For the case where $j=0$, we may define our bijection between $O_i^j(P)$ and $O_{i+1}(P')$ by taking some $\omega\in O_i^j(P)$, replacing $p_1$ with $0$, and standardizing to $P' \cup [i+1]$ to get $\omega'$. To prove that $\omega$ has a witness, one can use the fact that $i+2-j<p_1$ to argue for the existence of a witness $\pi \in S_{2k+1}$ to $\omega$ which contains the factor $(p_1-2)p_1(p_1-1)$ where $p_1-1, p_1-2 \not \in [i]$. Replacing this factor with $0$ and standardizing will then result in a witness for $\omega'$, which can be shown to be in $O_{i+1}(P')$. The fact that $i+2-j<p_1$ must be used again when showing that the reverse of this map is well-defined. 
\end{proof}

Given some set $P$, we make the following definition:
$$\delta_j = 
 \begin{cases} 
      1           &\text{if } p_1>j,\\
      0           &\text{otherwise}.
\end{cases}
$$
Then we can state our main result.

\begin{theorem}
Let $|P|\geq 1$, $P' = r(P)$, $0\leq i < p_1$, and suppose the conditions in (\ref{Condition 1}) hold. Then we have the following recursion 
$$o_i(P) = i(i-1)o_{i-1}(P') + (2i) \delta_{i+1} o_i(P') + \delta_{i+2} o_{i+1}(P').$$
\end{theorem}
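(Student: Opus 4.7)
The plan is to partition $O_i(P)$ based on the number of neighbors of $p_1$ that lie in $[i]$, and then apply Lemma~\ref{ordering map} to each piece of the partition. Since $p_1$ has at most two neighbors in any linear ordering, we obtain a disjoint decomposition
\[
O_i(P) = O_i^0(P) \sqcup O_i^1(P) \sqcup O_i^2(P),
\]
and hence $o_i(P) = o_i^0(P) + o_i^1(P) + o_i^2(P)$. I would then evaluate each summand and match it to the corresponding term on the right-hand side of the recursion.

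For $j=2$: when $i \geq 2$, Lemma~\ref{ordering map} gives $o_i^2(P) = i(i-1)\,o_{i-1}(P')$, and when $i \leq 1$ both the factor $i(i-1)$ and the set $O_i^2(P)$ vanish (since there are fewer than two distinct elements available in $[i]$), so the identity still holds. For $j=1$: if $\delta_{i+1} = 1$, Lemma~\ref{ordering map} yields $o_i^1(P) = 2i\,o_i(P')$. If $\delta_{i+1} = 0$, then $p_1 = i+1$ (since $i<p_1$), so in any witness $\pi$ both neighbors of the pinnacle $p_1$ lie in $[p_1-1] = [i] = N_i$; these survive the restriction to $\omega$, forcing $\omega \in O_i^2(P)$ and hence $O_i^1(P) = \emptyset$, matching $2i\,\delta_{i+1}\,o_i(P') = 0$. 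For $j=0$: if $\delta_{i+2} = 1$, Lemma~\ref{ordering map} yields $o_i^0(P) = o_{i+1}(P')$. If $\delta_{i+2} = 0$, then $p_1 \in \{i+1, i+2\}$, and the two distinct neighbors of $p_1$ in any witness lie in $[p_1-1] \subseteq [i+1]$; at most one of them can equal $i+1$, so at least one lies in $[i] = N_i$ and remains adjacent to $p_1$ after the restriction, forcing $O_i^0(P) = \emptyset$.

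The main subtlety will be the cases where $O_i^j(P)$ is empty yet the matching $\delta$-factor and multiplicative prefactor are both nonzero, since Lemma~\ref{ordering map} then does not apply directly. I would resolve this by appealing to the reverse direction of the bijection in that lemma: applied to any element of the relevant $O_\ast(P')$, it produces a valid element of $O_i^j(P)$, so $O_i^j(P) = \emptyset$ forces $o_\ast(P') = 0$ and the equation remains balanced. Summing the three cases then gives the stated recursion.
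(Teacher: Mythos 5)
Your proposal is correct and follows the same skeleton as the paper's proof: partition $O_i(P)$ according to the number $j$ of neighbors of $p_1$ lying in $N_i$, so that $o_i(P)=o_i^0(P)+o_i^1(P)+o_i^2(P)$, and evaluate each piece via Lemma~\ref{ordering map}. Where you genuinely diverge is in the degenerate cases $O_i^j(P)=\emptyset$, which occupy most of the paper's argument. The paper characterizes emptiness exactly ($O_i^j(P)=\emptyset$ iff $P$ is inadmissible, or $i+2-j\geq p_1$, or $j>i$) and then verifies the right-hand side vanishes in each case; in particular the inadmissible-$P$ case needs a separate argument showing $P'=r(P)$ must then be inadmissible too. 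You instead handle two sub-cases directly (when $\delta_{i+1}=0$ or $\delta_{i+2}=0$ you show, correctly, that both neighbors, resp.\ at least one neighbor, of $p_1$ must lie in $N_i=[i]$, so $O_i^1(P)$ resp.\ $O_i^0(P)$ is empty) and dispatch the remaining possibility by running the reverse half of the bijection, transferring emptiness of $O_i^j(P)$ to the relevant $O_*(P')$. This is cleaner and sidesteps the admissibility discussion entirely. The one point you should make explicit is that Lemma~\ref{ordering map} as stated hypothesizes $O_i^j(P)\neq\emptyset$, so it cannot be cited verbatim in that last step: you must observe that its reverse map is well-defined whenever $i\geq j$ and $i+2-j<p_1$ (the only consequences of non-emptiness the lemma's proof actually uses), and that these inequalities are precisely what the nonvanishing of the multiplicative prefactor and of $\delta_{i+2-j}$ guarantee.
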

\begin{proof}
Note that $O_i(P)$ can be partitioned into three (possibly empty) subsets depending on whether $p_1$ is directly adjacent to either two, one, or no elements of $N_i$. 
It follows that 
$$
o_i(P)=o_i^2(P)+o_i^1(P)+o_i^0(P).
$$
Therefore, we will have our result if we can show that 
$$o_i^{j}(P) = 
 \begin{cases} 
      i(i-1) o_{i-1}(P')                &\text{if } j = 2,\\
      2i \delta_{i+1} o_i(P')           &\text{if } j = 1,\\
      \delta_{i+2} o_{i+1}(P')          &\text{if } j = 0.
\end{cases}
$$
If $O_i^j(P)$ is non-empty, then we have our result by Lemma \ref{ordering map} since we will have that $i+2-j<p_1$ and  the corresponding delta is $1$. Therefore, we only need to concern ourselves with the case where $o_i^j(P) = 0$, in which we must prove that the corresponding expression for $o_i^j(P)$ is also zero.

First, we claim that $o_i^j(P) = 0$ if and only if either $P$ is not admissible, $i+2-j\geq p_1$, or $j>i$. That each of these three conditions independently forces $o_i^j(P) = 0$ is not hard to see since we will either have that $o_i(P) = 0$ or that there are not enough of the necessary elements to surround $p_1$ and have it still be a pinnacle. For the reverse direction, we prove the contrapositive. Suppose that $P$ is admissible, that $i+2-j< p_1$, and that $j\leq i$. In this case, chose any witness to any ordering of $P$ and call it $\pi$. Now by our two inequalities, we know there must exist two elements, $x$ and $y,$ in $\pi$ such that $j$ of them are in $[i]$ and $2-j$ of them are between $i$ and $p_1$. If we then swap $x$ and $y$ with the pair of elements to either side of $p_1$ in $\pi$, we will have a witness to an ordering in $O_i^j(P)$. The fact that the pinnacle set of this witness is indeed $P$ can be proven similarly to Lemma \ref{swap small elements} since $\pi$ alternated pinnacles and non-pinnacles, and since all elements swapped were smaller than all pinnacles. Therefore, we have an ordering in $O_i^j(P)$, which proves it is non-empty. This finishes the proof of the claim.

Therefore, in the case where $o_i^j(P) = 0$ we may suppose that at least one of these three conditions is true. If $j>i$, we immediately have that all three expressions for $o_i^j(P)$ are zero and so we still have equality even in the case when $j>i$. If we assume that $i+2-j\geq p_1$ for some $j \in \{0,1,2\}$, we must have that $j<2$ since otherwise we contradict our assumption that $i< p_1$. But if $j=0,1$, we immediately have that $\delta_{i+2-j}$ is zero and so again we have equality. 

To finish the proof, all we have left to show is that all three expressions are zero if $j\leq i$, $i+2-j< p_1$, but $P$ is not admissible. Clearly if $P'$ fails to be admissible too, all three terms will immediately be zero, so suppose that $P'$ is admissible. This is equivalent to saying that for every element $p \in P'$, the number of elements in $P'$ less than $p$ is exceeded by the number of elements not in $P'$ less than $p$. This will still be the case for all corresponding pinnacles in $P$ since both elements added are smaller than all other pinnacles, and so if $P$ fails to be admissible, it must be because the added element of $P$, $p_1$, is less than $3$. But supposing $j\leq i$ and $i+2-j< p_1$ together force $p_1>2$, and so we have a contradiction. 

Therefore, since in all cases where $o_i^j(P) = 0$ we have that the corresponding expression is also zero, we have our result. 
\end{proof}

\begin{cor}
This recursion can be used to compute $o(P) = o_0(P)$. Furthermore, the number of terms that need to be computed will be asymptotically equal to $k^2/4$.
\end{cor}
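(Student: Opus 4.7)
The plan is to split the proof into two parts: first verify that iterating the recursion terminates and indeed computes $o(P) = o_0(P)$, and second estimate the number of distinct subproblems encountered.

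For termination and correctness I would note that each application of the recursion rewrites $o_i(P)$ as a linear combination of values $o_j(P')$ with $|P'| = |P| - 1$. Iterating at most $k$ times reduces to $|P^{(k)}| = 0$, at which point $o_0(\emptyset) = 1$ serves as the trivial base case. The preceding theorem certifies validity at each step, and termination follows since $|P|$ strictly decreases with each reduction.

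For the count I would build the recursion tree rooted at $(P, 0)$, labelling nodes by pairs $(d, i)$ where $d$ is the number of reductions applied, so that the node represents the subproblem $o_i(P^{(d)})$ with $P^{(d)} = r^d(P)$. Writing $T_d$ for the set of $i$-values occurring at depth $d$, and using the fact that $P^{(d)}$ depends only on $d$, the total number of distinct subproblems is $\sum_{d=0}^{k} |T_d|$. A short induction, using that the coefficients $i(i-1)$ and $2i\delta_{i+1}$ vanish at $i \in \{0, 1\}$ and at $i = 0$ respectively, establishes $T_0 = \{0\}$ and $T_d \subseteq \{1, 2, \ldots, d\}$ for $d \geq 1$. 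The gating factors $\delta_{i+1}$ and $\delta_{i+2}$ further restrict the transitions: $(d, i) \to (d+1, i)$ requires $p_1^{(d)} > i+1$, and $(d, i) \to (d+1, i+1)$ requires $p_1^{(d)} > i+2$.

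The key quantitative input is the bound $p_1^{(d)} \leq k - d + 2$, which holds because $|P^{(d)}| = k - d$ sits inside $[2(k-d)+1]$ and so its minimum is at most $k - d + 2$. Combining, $|T_d| \leq \min(d,\, p_1^{(d)} - 1) \leq \min(d,\, k - d + 1)$ for $d \geq 1$, so
\[
\sum_{d=0}^{k-1} |T_d| \;\leq\; 1 + \sum_{d=1}^{k-1} \min(d,\, k - d + 1) \;\sim\; \frac{k^2}{4}
\]
as $k \to \infty$, because the summand is a symmetric tent function peaking near $d = k/2$. For the matching lower bound I would specialize to $P = \{k+2, k+3, \ldots, 2k+1\}$, for which the chain $p_1^{(d)} = k + 2 - d$ achieves the extremum at every depth, and verify that each $i \in \{1, \ldots, \min(d,\, k - d + 1)\}$ is actually reached at depth $d$.

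The main obstacle is the bookkeeping: one must confirm that distinct $(d, i)$ correspond to genuinely distinct subproblems (so no double counting occurs), and that in the extremal construction the reachable set $T_d$ fills the full interval $\{1, \ldots, \min(d, k - d + 1)\}$ rather than some sparser subset. Both points reduce to tracking the $\delta$-gating along root-to-leaf paths in the tree, which is routine but requires care.
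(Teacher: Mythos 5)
Your proposal is correct and, for the upper bound, follows essentially the same route as the paper: track the pairs $(d,i)$ of reductions applied and vale-index, bound $i$ from above both by the depth $d$ (since $i$ increases by at most one per step from $i=0$) and by a quantity decreasing in $d$, and sum the resulting tent function to get $\sim k^2/4$. Two small differences are worth noting. First, your bound $|T_d|\le p_1^{(d)}-1\le k-d+1$ routes through the invariant $i<p_1^{(d)}$, whereas the paper bounds $i$ by $|N^{(d)}|=k-d+1$ directly; both work, but yours makes explicit that the very same $\delta$-gating argument that controls the count also certifies that the theorem's hypothesis $0\le i<p_1$ holds at every subproblem actually computed --- the paper devotes the first half of its proof to exactly this check (the $j=2$ branch has no $\delta$ and must be handled via $p_1'\ge p_1-1$), so do not leave it as ``routine'' without writing out the three transition cases. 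Second, your observation that the corollary's phrase ``asymptotically equal'' requires a matching lower bound is well taken: the paper in fact only proves that the number of terms is \emph{at most} asymptotically $k^2/4$, so your extremal family $P=\{k+2,\dots,2k+1\}$ (which is admissible, is fixed by $r$ up to reindexing, and attains $p_1^{(d)}=k-d+2$ at every depth) genuinely adds something, provided you carry out the reachability check that $T_d$ fills $\{1,\dots,\min(d,k-d)\}$ up to $O(1)$.
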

\begin{proof}
First, note that every step reduces the size of $P$ by one, and so if we induct on $k$, we have the base case $P = \emptyset$, and $|N| = 1$ which always has only one ordering. In order to show that we may repeatedly use this recursion however, we must show that we never need to compute some $o_i(P)$ where $i\geq p_1$. 

First, note that on the first step when $i=0$, we must have that $i<p_1$ since $P \neq \emptyset$. Now consider any application of the recursion on $o_i(P)$ where $i<p_1$. Since $p'_1 = p_2-2$ forces $p'_1 \geq p_1-1$ by definition of $r(P)$, we know that $p'_1>i-1$ and so the recursion may be applied on $o_{i-1}(P')$. Now we consider $o_{i}(P')$. If $i < p'_1$ we may apply the recursion; if not, then we will have that $i \geq p_1-1$ which means $p_1\leq i+1$. But then we will have that $\delta_{i+1} = 0$, and so we will not have to calculate $o_{i}(P')$. Finally, we consider $o_{i+1}(P')$. If $i+1 < p'_1$ we may apply the recursion; if not, then we will have that $i+1 \geq p_1-1$ and so $p_1\leq i+2$. But then we will have that $\delta_{i+2} = 0$, and so we will not have to calculate $o_{i+1}(P')$. Therefore, the recursion never generates a term that cannot then be further broken down.

As to the number of terms that need to be calculated, let $P^j = r^j(P)$ applying the $r$ operator $j$ times where $0\leq j \leq k$. Note that since $i$ starts out as $0$ and $i$ can only increase by at most $1$ each step, we must have that $i\leq j$ for any term $o_i(P^j)$. Additionally, since all permutations have elements in $[2(k-j)+1]$, we know that $i \leq k-j+1$ since the remaining $k-j$ elements are in $P^j$. Therefore when calculating terms of the form $o_i(P^j)$, if $j\leq k/2$ we know that $0\leq i \leq j$ while if $k/2 \leq j \leq k$ we know that $0\leq i \leq k-j+1$. Therefore, if we sum up all the different values that $i$ can take over all values of $j$, we have that the total number of terms is at most
$$
\sum_{j=0}^{\floor{k/2}} (j+1) + \sum_{j=\ceil{(k+1)/2}}^k (k-j+2) = \sum_{j=0}^{\floor{k/2}} (j+1) + \sum^{\floor{k/2}}_{j = 0} (j+2) =  \binom{\floor{k/2}+2}{2} + \binom{\floor{k/2}+2}{2} + \floor{k/2}+1
$$
which is asymptotically $k^2/4$.
\end{proof}

\section{Block representation of pinnacle sets}

Given a (not necessarily admissible) set $P = \{p_1 < p_2 < \cdots < p_k\} \subseteq [n]$, we may choose to represent $P$ as an array of some length $n \geq p_k$ consisting of 0s and 1s such that a given index $i$ contains a 1 if and only if $i \in P$. For example, if we had 
$$P = \{3,6,7\}$$
then the corresponding array for $n = 7$ would be
$$[0,0,1,0,0,1,1].$$
For some fixed $n$, we can clearly we can go back and forth between these representations without loss of information, and the array will correspond to an admissible pinnacle set if and only if it is a ballot sequence, that is, at every index the number of 0s exceeds the number of 1s occurring before that index (we will not use this fact, but it is interesting). Now for the purposes of our proofs, it will be useful to think of $P$ using the array notation because it allows use to visually divide $P$ up into ``blocks,'' which is to say, we may break the array up into sub-arrays. For instance, in our previous example if we let 
$$B_1 = [0,0,1],\, B_2 = [0,0,1,1]$$
then we may say that $P$ can be written as $B_1B_2$ where we concatenate the arrays to recover $P$. We may then apply algorithms recursively to each block, and combine the results to say something about $P$. For our purposes, it won't matter how we break up $P$ provided that all blocks are non-empty, and so we will not (at this point) structure our notation to indicate how much of $P$ is contained in $B_1$.

As an aside (but something that will not be used in the proof) we may choose our blocks carefully so that they correspond to trees. This is done by simply looking at the binary tree representation of $P$ (which is given in greater detail below), and letting $B_1$ consist of those indices appearing in the first tree, $B_2$ of those indices appearing in the second tree, and so on. 
Our approach is a little more general however as it allows us use blocks that contain ``part'' of a tree, which will be useful in our applications. Using this particular division into blocks however will allow us to say some things about the tree representation of $P$ later. 

We now must introduce some new ideas and notation.

Given a block $B$, which is to say, an array of some length $m$ containing only 0s and 1s, we may define a set, $P_B$, to contain all those indices in $B$ that are 1s. We then define $P(B)$ to be the set of permutations in $S_m$ with pinnacle set $P_B$, and further define $p(B) =|P(B)|$. For instance, suppose that in our running example we took $B_1 = [0,0,1]$. Then $P_{B_1} = \{3\}$ and $p(B_1) = 2$ since there are two permutations in $S_3$ that have pinnacle set $\{3\}$. On the other hand, if we took $B_2 = [0,0,1,1]$ we would have $P_{B_2} = \{3,4\}$ and $p(B_2) = 0$ since $P_{B_2}$ is not an admissible pinnacle set.

We now take this a step further. Given a block $B$ of length $n$ with corresponding pinnacle set $P_B$, it will be desirable to modify $P_B$ by adding additional elements larger than $n$ (that is to say, we wish to force additional pinnacles). Furthermore, we will want to require that these added elements are indistinguishable from one another, which is a deviation from normal pinnacle sets where all pinnacles are different. Pinnacle sets are still defined however even when considering permutations with repeated numbers, and so the notion still makes sense.

More formally, define $P(B)^i$ to be all permutations of the multiset $\{1,2,\ldots, m\} \cup \{(n+1)^i\}$ that have pinnacle multiset $P_B \cup \{(n+1)^i\}$, and define $p(B)^i = |P(B)^i|$. For instance, if $B = [0,0,1,0,0,0]$, then $P_B = \{3\}$ and $m=6$. To compute $P(B)^2$, we must ask which permutations of the set $\{1,2,3,4,5,6,7,7\}$ have pinnacle set $\{3,7,7\}$, of which there are 36 (there are 3 ways to decide the order of $3,7,7$ within the permutation, 2 ways to decide the order of 1 and 2 on either side of 3, and 6 ways to arrange the elements 4,5,6 to fill the remaining gaps between the pinnacles). On the other hand, $p(B)^5 = 0$ because there would be too many pinnacles forced. 

There is a similar idea for forcing cyclic vales. Suppose once again that we have a block $B$ of length $n$ with corresponding pinnacle set $P_B$. It will be useful to talk about the set of permutations having not only pinnacle set $P_B$, but also a cyclic vale set that contains a certain set of desired elements. In fact, similar to how above we added new, large elements and forced them to be pinnacles, we will want to add new, small elements and force them to be cyclic vales. 

More formally, define $P(B)_j$ to be all permutations of the multiset $\{1,2,\ldots, n\} \cup \{0^j\}$ that have pinnacle set $P_B$ and cyclic vale set containing the multiset $\{0^j\}$, and define $p(B)_j = |P(B)_j|$ . For instance, if $B = [0,1,1]$, then $P_B = \{2,3\}$ and $m=3$. To compute $P(B)_2$, we must ask how many permutations of the set $\{0,0,1,2,3\}$ have pinnacle set $\{2,3\}$ and also cyclic vale set containing $\{0^2\}$, of which there are 6: there are 6 ways to order the elements $1,2,3$ and then we may always insert the two 0s in a unique way in the remaining positions around $2$ and $3$ to force them to be pinnacles. On the other hand, $P(B)_4 = 0$ because the only way to force all four zeros to be cyclic vales is if they alternate with the numbers $1,2,3$, but then this will force $1$ to become a pinnacle.

While the above ideas may seem strange, hopefully they will feel more natural in a minute. The gist is that in addition to requiring that a particular set becomes a pinnacle set, we may now throw in additional large elements and require them to be pinnacles, or additional small elements and require them to be cyclic vales. In fact, we will often do both at once, and to this end, define the notation $P(B)^i_j$ to be all permutations of the multiset $\{1,2,\ldots, n\}\cup\{0^j, (n+1)^i\}$ that have pinnacle multiset $P_B \cup \{(n+1)^i\}$ and vales set containing $\{0^j\}$. In the event that no such permutations exist, we simply say that $P(B)^i_j = \emptyset$. Note that if any such permutations do exist, it will always be the case that no two elements of $\{(n+1)^i\}$ are adjacent, and that no two elements of $\{0^j\}$ are adjacent, as that would not allow them to all be pinnacles or cyclic vales respectively. This fact will be helpful in our first proof.

Finally, we need to remind the reader of cyclic permutations. In general, any linear permutation in $\{1,2,\ldots n\} \cup\{0^j, (n+1)^i\}$ included in $P(B)^i_j$, that is, having pinnacle multiset $P_B \cup \{(n+1)^i\}$ and cyclic vale set containing $\{0^j\}$, corresponds to a cyclic permutation in $\{1,2,\ldots n, \infty\} \cup\{0^j, (n+1)^i\}$ having cyclic pinnacle multiset $P_B \cup \{(n+1)^i, \infty\}$ and cyclic vale set containing $\{0^j\}$. This correspondence is achieved simply by appending $\infty$ onto the beginning of the linear permutation, and wrapping it around into a cyclic permutation. The reverse operation is also well defined. Therefore, we define $cP(B)^i_j$ to be the set of cyclic permutations corresponding to the linear ones in $P(B)^i_j$. 

\begin{theorem}\label{Section 3 recursion}
Suppose that some block $B$ (as described above) of length $n$ is decomposed into two, non-empty parts $B_1$ and $B_2$ such that the concatenation $B_1B_2$ gives back $B$. Suppose further that the number of ones in $B_2$ is $b$. Then 
$$
p(B)^i_j = \sum_{a=1}^{b+i+1} p(B_1)^{a-1}_j p(B_2)^i_{a} 
$$ 
\end{theorem}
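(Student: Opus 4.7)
The plan is to realize the identity as a bijection obtained by splitting a permutation in $P(B)^i_j$ according to the partition $B=B_1B_2$. Let $n_1$ be the length of $B_1$, so that $B_1$ encodes the pinnacles of $P_B$ in $\{1,\dots,n_1\}$ and $B_2$ encodes the $b$ pinnacles of $P_B$ in $\{n_1+1,\dots,n\}$. I call an entry \emph{small} if it lies in $\{0^j,1,\dots,n_1\}$ and \emph{large} otherwise, so that after passing to the cyclic closure $[\pi']\in cP(B)^i_j$ the large entries form the multiset $\{n_1+1,\dots,n,(n{+}1)^i,\infty\}$. The summation parameter $a$ will count the number of maximal runs of small entries in $[\pi']$.

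The forward map sends $\pi\in P(B)^i_j$ to the following pair. Reading $[\pi']$ in its canonical cyclic orientation starting at $\infty$, I list the small-entry runs as $R_1,R_2,\dots,R_a$. The \emph{small output} is the linear concatenation
\[
\tau\;=\;R_1\,(n_1{+}1)\,R_2\,(n_1{+}1)\,\cdots\,(n_1{+}1)\,R_a,
\]
a permutation of $\{0^j,1,\dots,n_1,(n_1{+}1)^{a-1}\}$. The \emph{large output} is obtained by collapsing each $R_k$ to a single $0$ inside $[\pi']$ and then subtracting $n_1$ from every large value, producing a cyclic permutation on $\{0^a,1,\dots,n_2,(n_2{+}1)^i,\infty\}$, i.e.\ an element of $cP(B_2)^i_a$. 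The inverse reads a prescribed $\sigma\in cP(B_2)^i_a$ forward from $\infty$ and replaces the $k$-th $0$ encountered by the $k$-th block obtained from cutting $\tau\in P(B_1)^{a-1}_j$ at its separators, then strips $\infty$ to recover a linear $\pi$. That the two maps invert each other is immediate once the outputs are legal, because the basepoint $\infty$ and cyclic orientation match the blocks $R_k$ with the zero slots unambiguously.

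The main obstacle is verifying that both outputs really land in the claimed sets. For $\tau$: every separator $n_1{+}1$ exceeds its two small neighbors inside $\tau$ and is therefore a pinnacle, the internal pinnacles of the $R_k$ are exactly the small pinnacles of $\pi$, so the pinnacle multiset is $P_{B_1}\cup\{(n_1{+}1)^{a-1}\}$; each zero of $\pi$ remains a cyclic vale of $\tau$ because at a run boundary the former large neighbor is replaced either by a separator $n_1{+}1$ or by the auxiliary end element used to compute cyclic vales of $\tau$, both of which still exceed $0$. For the large output: a large entry of $[\pi']$ is a cyclic pinnacle iff the corresponding entry of the compressed word is, since a nonempty run $R_k$ squeezed between two large entries plays exactly the role that a single $0$ vale plays after collapse (it supplies a neighbor smaller than either boundary), and the injected zeros are automatically cyclic vales. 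Finally, the bound $a\ge 1$ follows because $B_1$ is nonempty forces $n_1\ge 1$ and hence at least one small entry, while $a\le b+i+1$ follows because in $cP(B_2)^i_a$ cyclic pinnacles and cyclic vales alternate, so the $a$ zero-vales cannot outnumber the $b+i+1$ cyclic pinnacles $P_{B_2}\cup\{(n_2{+}1)^i,\infty\}$ (equivalently $p(B_2)^i_a=0$ for larger $a$). Summing the resulting bijection over $a$ yields
\[
p(B)^i_j\;=\;\sum_{a=1}^{b+i+1} p(B_1)^{a-1}_j\,p(B_2)^i_a,
\]
as claimed.
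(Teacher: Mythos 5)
Your proposal is correct and follows essentially the same route as the paper: both decompose the cyclic closure of $\pi$ into alternating runs of ``low'' elements (those of $B_1$ together with the $0$'s) and ``high'' elements, index by the number $a$ of alternations, and collapse each run to a single marker ($n_1+1$ or $0$) to produce the pair in $P(B_1)^{a-1}_j\times P(B_2)^i_a$, with the same verification that pinnacle and vale statuses are preserved and the same bounds on $a$. The only cosmetic difference is that you present the small output as a linear permutation with separators while the paper keeps it cyclic with $\infty$ as basepoint, which are interchangeable.
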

\begin{proof}
First, let $L$ be the set of all elements corresponding to those in block $B_1$ together with the set $\{0^j\}$. Similarly let $H$ be the set of all remaining elements in $[n]$ not in $L$ together with the set $\{(n+1)^i, \infty\}$. Then $L$ will be the ``low'' set and $H$ the ``high'' set in the sense that all elements in $L$ will be smaller than all elements in $H$, and given any $[\pi] \in cP(B)^i_j$, we have that all elements in $[\pi]$ will be in either $L$ or $H$. Since $p(B)^i_j = |cP(B)^i_j|$ it suffices to consider only cyclic permutations to prove the result.

Therefore given some $[\pi] \in cP(B)^i_j$ we may decompose it into maximal consecutive sequences of elements either entirely in $H$ or entirely in $L$. Since such sequences from $H$ must alternate with those in $L$, we have that there are the same number of each and we call this number the \textit{alternations} of $[\pi]$ and denote it by $a$. Note that $a$ must be at least $1$ since we assumed both $B_1$ and $B_2$ were non-empty, and furthermore we know that $a$ cannot exceed the number of cyclic pinnacles in $H$ since every consecutive string of elements in $H$ is adjacent to two values in $L$ and so must contain at least one cyclic pinnacle of $[\pi]$. Since every cyclic pinnacle element from $H$ must either be one of the $b$ pinnacles from $B_2$, $n+1$, or $\infty$, we have an upper bound of $b+i+1$ for $a$.

Therefore, if we fix some $1\leq a \leq b+i+1$, we may reduce to proving that the number of permutations in $cP(B)^i_j$ with $a$ alternations is $|P(B_1)^{a-1}_j||P(B_2)^i_a|$ since the values of $a$ clearly partition all permutations in $cP(B)^i_j$.

We prove this by constructing a bijection. An example follows this proof. Let $l = |B_1|, h = |B_2|$, and suppose for the forward direction that $[\pi] \in cP(B)^{i}_j$ has $a$ alternations. Then in $[\pi]$ we may replace the block of consecutive elements in $H$ containing $\infty$ with the element $\infty$ and each of the other $a-1$ blocks of consecutive elements in $H$ with the element $l+1$ to get $[\pi_L]$. Additionally, we may take the original $[\pi]$ and replace each of the $a$ blocks of consecutive elements in $L$ with the element $0$ and subtract $l$ from all remaining elements to get the cyclic permutation $[\pi_H]$. We must now show that $[\pi_L] \in cP(B_1)^{a-1}_j$ and $[\pi_H] \in cP(B_2)^i_a$.

From the construction given, it is easy to see that $[\pi_L]$ is a permutation of the elements $\{1, \ldots, l\} \cup \{(l+1)^{a-1},\infty, 0^j\}$ and that $[\pi_H]$ is a permutation of the elements $\{1, \ldots, h\} \cup \{(h+1)^{i}, \infty, 0^a\}$. Therefore, we need only show that all the necessary cyclic pinnacles and vales are formed in both cases. 

Consider $[\pi_L]$. Clearly we must have that every element in $\{(l+1)^{a-1}\}$ is a cyclic pinnacle since they are larger than all elements in $\{0, 1,\ldots, l\}$, and since none of them are adjacent to each other or to $\infty$ because everything in $\{(l+1)^{a-1}, \infty\}$ originated from separate blocks of elements in $[\pi]$. Furthermore, given any element $x \in L$ the relative order between $x$ and the elements on either side is the same in $[\pi_L]$ as it was in $[\pi]$, since the only change was that an element larger than $x$ may have been replaced by another element larger than $x$. Therefore, since all elements in $P_{B_1}$ and $\{0^j\}$ were cyclic pinnacles and vales respectively in $[\pi]$, they will continue to be so in $[\pi_L]$, and so we are done. The proof for showing $[\pi_H] \in cP(B_2)^i_a$ is almost identical, and therefore omitted. 

Therefore, our map is well-defined. To invert it, suppose that $[\pi_L] \in cP(B_1)^{a-1}_j$ and $[\pi_H] \in cP(B_2)^i_a$. To combine them, first add $l$ to all elements of $[\pi_H]$ to get $[\pi'_{H}]$. Then write $[\pi_L]$ in the form $[\infty L_1 (l+1) L_2 (l+1) \cdots (l+1) L_a]$ where each $L_i$ is a maximal consecutive sequence of elements less than $l+1$. Similarly, we may write $\pi'_H$ in the form $[H_1 l H_2 l \cdots l H_a l]$ where each $H_i$ is a maximal consecutive sequence of elements greater than than $l$, and where $H_1$ contains $\infty$. We then construct the cyclic permutation $[\pi] = [H_1L_1H_2L_2\cdots H_aL_a].$ It is easy to verify that $[\pi]$ contains $j$ copies of the element $0$, $i$ copies of the element $n+1$, and that all other elements are either $\infty$ or in $B$. Furthermore we note that given any element $x$ in $[\pi]$, the relative sizes of $x$ and its two adjacent elements will be the same as they were for $x$ back in either $[\pi_L]$ or $[\pi_H]$. Therefore, all cyclic pinnacles and vales in $[\pi]$ are directly derived from those in $[\pi_L]$ or $[\pi_H]$. This means that all $0$'s will be vales, all elements of size $n+1$ will be cyclic pinnacles, and all other cyclic pinnacles will correspond to a cyclic pinnacle in either $B_L$ or $B_H$. Therefore, $[\pi] \in P(B)^i_j$.

Since these maps are clearly inverses step by step, we have our bijection, and therefore our result. 
\end{proof}

As an example of this process, consider $B = [00101001]$ where $B_1 = [001]$ and $B_2 = [01001]$. Then we have that $P_B = \{3,5,8\}$ while $P_{B_1} = \{3\}$ and $P_{B_2} = \{2,5\}$ which correspond to the elements $5,8$ in $P_B$ if you add $|B_1| = 3$. We may then consider the permutation $[\pi] = [\infty 0523081964097] \in cP(B)^2_3$. If we write in boldface those elements of $[\pi]$ that are in set $L$ defined in the proof, we have 
$$[\pi] = [\infty \05\2\3\08\1964\097]$$ 
and now it is easier to see that there are 4 alternations were we switch from an element of $L$ to one in $H$ and then back. Then we have 

$$[\pi_L] = [\infty 042304140]$$ and
$$[\pi_H] = [\infty 02050631064]$$
which we get by standardizing
$$[\infty 05080964097]$$

This recursion is very useful, as it allows us to give an efficient method for calculating $p_n(P) = |\{\pi \in S_n | \Pin \pi  = P\}|$ using $O(k^2\log n + k^4)$
arithmetic operations where $k = |P|$. It also allows us to derive results on the tree representation of pinnacles sets, which we do later.

For a pinnacle set represented by block $B$, we will call $B$ a \textit{segregated} block if it takes the form $B = [0^x, 1^y]$ in which we allow $x$ and $y$ to potentially be zero. That is to say, $B$ must consist of some number of $0$'s followed by some number of $1$'s with no other alternations. Our method to calculate $p_n(P)$ has two parts. First, we will present an efficient formula for calculating $p(B)^i_j$ where $B$ is a segregated block. We will then show that by using the recursion above, we may always write $P(A)$ for any block $A$ in terms of these numbers, thereby giving a formula for the generic case.

\begin{lem}
\label{Section 3 segregated lemma}
Let $B = [0^x,1^y]$ be a segregated block where $x+y = n$. Then if $S(n,m)$ is the Stirling number of the second kind, we have

$$p(B)^i_j = 2^{x+j-c}\ \frac{c!(c-1)!}{i!} \sum_{m=0}^j \frac{1}{m!} \binom{c-m}{j-m} S(x,c-m)$$
where $c = i+y+1$ is the number of cyclic pinnacles.
\end{lem}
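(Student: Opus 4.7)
The plan is to count $p(B)^i_j$ by working in the corresponding set $cP(B)^i_j$ of cyclic permutations and exploiting a crucial feature of segregated blocks: the forced pinnacle values $P_B = \{x+1, \ldots, n\}$, the $i$ copies of $n+1$, and $\infty$ are all strictly larger than every element of the low pool $\{0^j, 1, 2, \ldots, x\}$. Thus any cyclic permutation in $cP(B)^i_j$ consists of $c = y + i + 1$ pinnacles alternating cyclically with $c$ vales drawn from the low pool, separated by strictly monotone runs whose interior elements also come from the low pool; moreover, a low element may occupy any run provided it is strictly greater than that run's vale.

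I would decompose each such cyclic permutation into $c$ \emph{clusters}, where the $k$th cluster consists of the vale at position $k$ together with the run elements flanking it on either side. Since a strictly monotone run admits no repeated values, each cluster contains at most one zero, and any such zero must be the vale. Letting $m$ be the number of \emph{pure zero clusters} (those containing only a $0$), the remaining $c - m$ clusters each contain a nonempty subset of $\{1, \ldots, x\}$; of these, $j - m$ are \emph{zero-attached} (with vale $0$) and $c - j$ are \emph{non-zero} (with vale equal to the minimum of the subset). For fixed $m$ the count is then assembled as follows: partition $\{1,\ldots,x\}$ among the $c - m$ clusters containing elements of $\{1,\ldots,x\}$ in $S(x, c-m)$ ways; choose which $j - m$ of these receive the attached $0$ in $\binom{c-m}{j-m}$ ways; arrange the $c$ pinnacles cyclically, fixing $\infty$ and accounting for the $i$ indistinguishable copies of $n+1$, in $(c-1)!/i!$ ways; assign the $c$ clusters to the $c$ now-labeled vale positions (where the $m$ pure zero clusters are indistinguishable) in $c!/m!$ ways; and for each of the $x + j - c$ interior run elements, independently place it on the descending or ascending side of its cluster's vale, in $2^{x+j-c}$ ways. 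Multiplying these factors and summing over $m = 0, 1, \ldots, j$ reproduces the stated identity exactly.

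The main obstacle is verifying that this scheme is a bijection between $cP(B)^i_j$ and the tuples of choices. The forward direction is immediate since the cluster decomposition is canonically determined by the permutation. For the reverse, one must confirm that the reconstructed permutation actually has pinnacle multiset $P_B \cup \{(n+1)^i, \infty\}$ and vale multiset containing $\{0^j\}$: every designated pinnacle is a strict local maximum because its two neighbors come from the low pool; every zero is a strict vale because it is bracketed either by two pinnacles (pure zero case) or by larger run elements (zero-attached case), so no two zeros are ever adjacent; and no interior run element can be an extra pinnacle or vale since the monotone runs are strict. Degenerate terms with $c - m > x$ or $j - m > c - m$ vanish automatically via the Stirling numbers and binomial coefficients, so the sum may be taken over $0 \le m \le j$ as stated.
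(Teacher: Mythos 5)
Your proposal is correct and follows essentially the same argument as the paper: your ``clusters'' are exactly the paper's intermediate permutations, and every factor ($(c-1)!/i!$ for the pinnacle arrangement, $c!/m!$ for distributing clusters with $m$ indistinguishable pure-zero ones, $\binom{c-m}{j-m}S(x,c-m)$ for building the underlying sets, and $2^{x+j-c}$ for the left/right placement of non-minimal elements) is obtained in the same way. The only cosmetic difference is that you phrase the decomposition cyclically while the paper works with the linear form $I_1p_1'I_2\cdots p_{y+i}'I_c$.
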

\begin{proof}
Given $\pi \in P(B)^i_j$, we may break $\pi$ down 
into a sequence of pinnacles in the set $P = P_B \cup \{(n+1)^i\}$ and a sequence of intermediate permutations, where each intermediate permutation is defined to be a maximal set of consecutive elements in $\pi$ consisting of no pinnacles. Furthermore, we know that the pinnacles must alternate with the intermediate permutations so that $\pi$ takes the form $I_1p_1'I_2p_2'\cdots I_{y+i}p_{y+i}'I_{c}$ where $I_m$ are the intermediate permutations, $P = \{p_1',\ldots,p_{y+i}'\}$, and $c = i+y+1$ can be viewed as the number of cyclic pinnacles if we count $\infty$. Therefore, since every pinnacle is larger than every non-pinnacle, it will be the case that given any ordering of the pinnacles $P$ and any set of $c$ nonempty intermediate permutations, every ordering of those intermediate permutations, when interleaved with the pinnacle elements, will form a permutation in $P(B)^i_j$. There are $(c-1)!/i!$ ways to order the pinnacle elements since $i$ of them are indistinguishable, and so to get our result it is enough to show that the number of ways of generating and ordering $c$ non-empty intermediate permutations is

$$2^{x+j-c}\ c!\sum_{m=0}^j \frac{1}{m!} \binom{c-m}{j-m} S(x,c-m)$$

Note that since intermediate permutations cannot contain pinnacles, they must contain exactly one cyclic vale, which will always be the smallest element. Additionally, we must have that each intermediate set can contain at most one $0$, since containing more would force one of the $0$'s to not be a vale, which breaks the requirement of $P(B)^i_j$. Therefore, we may partition all possible sets of intermediate permutations by the number of permutations $m$ that contain only the element $0$. It then follows that each of the $c-m$ other intermediate permutations must contain at least one element of $[x]$, and will therefore be distinguishable. So for a given $m$ the number of distinct orderings of a given set of intermediate permutations is $c!/m!$, as specified in the formula.

Now given an intermediate permutation, let its intermediate set be the corresponding set of elements in the permutation, which will have no repeats since we have already shown that the element $0$ cannot appear twice. If an intermediate permutation has $l$ elements, then we may assign $l-1$ of them a left or right label, corresponding to which side of the smallest element they appear. Alternatively, if we start from an intermediate set and give all elements except the smallest a left/right label, then there will be exactly one way to construct an intermediate permutation from it. Namely, all left labeled elements will form a decreasing string to the left of the smallest element, and all the right labeled elements an increasing string to the right. Therefore, counting intermediate permutations can be reduced to counting intermediate sets and multiplying by a power of $2$. Since every collection of intermediate sets will have $x+j$ elements total, and since all of them except the $c$ cyclic vales will be given a label, this power of $2$ will be as stated in the above relation. 

Therefore, we have reduced to the case of counting the number of ways of partitioning the $x+i$ non-pinnacles into $c$ non-empty intermediate sets such that no set contains more than one $0$. This will force exactly $j$ of the intermediate sets to contain a $0$, and so all that remains is to place the final $x$ elements in the set $[x]$. If we again fix $m$ to be the number of intermediate sets containing only $0$, there will be $\binom{c-m}{j-m} S(x,c-m)$ ways to create the remaining $c-m$ intermediate sets, namely, the number of ways of first partitioning the elements of $[x]$ into $c-m$ non-empty sets, and then choosing $j-m$ of those sets to contain the remaining $0$'s. Since $m$ can be at most $j$, and since $m$ partitions all possible sets of intermediate sets, we may sum over $m$ to get the final part of our formula, which completes the proof.
\end{proof}

We can now prove our result.

\begin{theorem}
The recursion in Theorem \ref{Section 3 recursion} together with the result in Lemma \ref{Section 3 segregated lemma} is enough to compute $p(A)$ for a general block $A$ of length $n$ corresponding to a (not necessarily admissible) set $P$. Furthermore, if $|P| = k$ then this can be done using no more than $O(k^2\log n+k^4)$ arithmetic operations.
\end{theorem}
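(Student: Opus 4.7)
The plan is to apply Theorem \ref{Section 3 recursion} iteratively, reducing any block to a concatenation of segregated ones. Writing $P = \{p_1 < p_2 < \cdots < p_k\}$ and setting $p_0 = 0$, decompose $A = B_1 B_2 \cdots B_K$ where $B_m = [0^{p_m - p_{m-1} - 1}, 1]$ for $1 \le m \le k$, together with a trailing $B_{k+1} = [0^{n - p_k}]$ if $p_k < n$, so $K \in \{k, k+1\}$. Each $B_m$ has the segregated form $[0^{x_m}, 1^{y_m}]$ with $y_m \in \{0,1\}$, and Lemma \ref{Section 3 segregated lemma} gives a closed form for $p(B_m)^i_a$. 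Setting $C_m = B_1 \cdots B_m$ we have $p(A) = p(C_K)^0_0$, and Theorem \ref{Section 3 recursion} yields
$$
p(C_m)^i_j = \sum_{a=1}^{i+y_m+1} p(C_{m-1})^{a-1}_j \, p(B_m)^i_a.
$$
Iterated application reduces every computation to invocations of Lemma \ref{Section 3 segregated lemma}, which proves the first sentence.

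For the complexity bound, the key observation is that the $j$ parameter on the outer $p(C_m)$ is never altered by the recursion, so starting from $p(C_K)^0_0$ we only ever need values $p(C_m)^i_0$. Tracing the recursion back from level $K$, the range of $i$ needed at level $K-r$ is bounded by $r$, so the total number of required values $p(C_m)^i_0$ is $O(k^2)$; each is computed from a sum of at most $O(k)$ terms, contributing $O(k^3)$ arithmetic operations to the recursive assembly. At the leaves we need $p(B_m)^i_a$ for $O(k^2)$ pairs $(i,a)$ per block and $O(k)$ blocks; each invocation of Lemma \ref{Section 3 segregated lemma} costs $O(k)$ operations (the sum over $m'$) once the Stirling numbers $S(x_m, r)$ with $r \le k+2$ are in hand, giving the $O(k^4)$ term.

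The Stirling numbers are precomputed via
$$
S(x_m, r) = \frac{1}{r!} \sum_{l=0}^r (-1)^l \binom{r}{l} (r-l)^{x_m}.
$$
The dominant cost is the powers $t^{x_m}$ for $t \in \{0,1,\ldots,k+2\}$; using fast exponentiation each costs $O(\log n)$, so $O(k \log n)$ per block and $O(k^2 \log n)$ in total, while the remaining additive work to form all Stirling numbers is absorbed into $O(k^3)$. Combining everything yields the stated $O(k^2 \log n + k^4)$. The main obstacle I anticipate is the bookkeeping: verifying that $j$ really stays at $0$ throughout, tightly bounding the range of $i$ at each level, and organizing the power and Stirling precomputations so that values shared across $(i,a)$ pairs and across blocks are computed only once.
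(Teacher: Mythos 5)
Your proposal is correct and is essentially the paper's proof run in mirror image: the paper peels off the leading maximal segregated block and recurses on the suffix (so the superscript $i$ stays at $0$ on the recursed factor while the subscript $j$ grows by at most one per level), whereas you peel off the trailing block and recurse on the prefix (so $j$ stays at $0$ and $i$ grows by at most one per level). The choice of one-pinnacle blocks rather than maximal segregated blocks is immaterial since Theorem \ref{Section 3 recursion} allows any non-empty split, and your complexity accounting --- $O(k^2\log n)$ for the powers via fast exponentiation, $O(k^4)$ for the $O(k^3)$ leaf evaluations of Lemma \ref{Section 3 segregated lemma} at $O(k)$ each, and $O(k^3)$ for assembling the recursion --- matches the paper's term for term.
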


\begin{proof}
First we will show that the recursion and above lemma are enough to compute $p(A)^i_j$, which will then imply the result for $p(A) = p(A)_0^0.$ Afterwards we will consider the run time. 

Suppose that $0^x$ is the maximal sequence of $0$'s at the start of $A$, and that $1^y$ is the maximal sequence of $1$'s directly following it. Then we may write $A$ as the block concatenation $BA'$ where $B = [0^x,1^y]$ is a segregated block and $A'$ is the rest of block $A$. Note that $A'$ must either be empty, or have fewer $1$'s in it than $A$ since $B$ will always take at least the first $1$ that appears in $A$.

If $A'$ is empty, then we have that $A$ started segregated and we are done by Lemma \ref{Section 3 segregated lemma}. 
If not, then we may apply Theorem \ref{Section 3 recursion} to write $p(A)_j^i$ in terms of $p(B)^{i'}_j$ and $p(A')^{i}_{j'}$, the former of which can be computed directly from Lemma \ref{Section 3 segregated lemma} and the later of which can be further broken down using Theorem \ref{Section 3 recursion} again. Since we know that $A'$ must either be empty or have fewer $1$'s than $A$, we will only have to repeat this recursion a finite number of times before $p(A)^i_j$ is written entirely in terms of numbers that Lemma \ref{Section 3 segregated lemma} can calculate, giving us our final answer.

To prove the bound for the run time, suppose that at each step the block break down was $A = A_0 = B_1A_1$, $A_1 = B_2A_2$, and so on until we have $A_{d-1} = B_{d}A_d$ where $A_d$ is the first $A_l$ to be empty. Since each $B_l$ contains at least one $1$, we have that $d$ is bounded by $k$, the number of pinnacles. 

It will be useful to find an upper bound on the different values for $i$ and $j$ that might appear in our calculations. Note that every time we apply the recursion to $p(A_l)_j^i$, we have that $i$ must be zero. This is because $i=0$ initially when computing $p(A)_0^0$, and we only ever apply the recursion again on the second factor, which has the same $i$ as at the start. However, the first factor will take the form $p(B_{l+1})_j^{i'}$ where $i'$ ranges over the values $0$ to $b$ where $b$ is the number of $1$'s in $A_{l+1}$. Therefore in the worst case, $i'$ may reach a maximum of $k-1$ when using Lemma \ref{Section 3 segregated lemma} to compute $p(B_{1})^{i'}_j$ and so in all cases will be bounded by $k$. Now suppose that we apply our recursion to $p(A_{l})^{i}_{j}$ and let $j'$ be the maximum value of $j$ that can arise on any factor of the form $A_{l+1}$ in the sum. We have that $j'$ is at most $\max\{j, b+1\}$ where $b+1$ is maximized on the first step when $b$ is the number of $1$'s in $A_{l+1}$, which is at most $k-1$. Since $j$ starts out at zero, and the value of $b$ falls each step, we have that the maximum value that $j$ can be is $k$.  

We now wish to find the total run time of using Lemma \ref{Section 3 segregated lemma} to calculate all the $p(B_{l})^i_j$. First, let $n_l$ be the number of $0$'s in block $B_l$ and note that each $n_l$ is clearly bounded by $n$, which is the original length of $A$. Then note that the quantity $c-m = y+i+1-m$ in Lemma \ref{Section 3 segregated lemma} can be at most $k+1$ since $y$ is the number of $1$'s in the block $B_l$ while $i$ can be at most the number of ones in the block $A_l$, which totals to no more than $k$. Therefore, we only have to calculate Stirling numbers of the form $S(n_l, k')$ where $l$ is bounded by $k$ and $k'$ is bounded by $k+1.$

Using the formula 
$$S(n_l,k') = \frac{1}{k'!}\sum_{d=0}^{k'} (-1)^{k'-d}\binom{k'}{d}{d}^{n_l}$$
we may first calculate all powers ${d}^{n_l}$ where $d \in [k+1]$ and $l \in [k]$ using fast exponentiation in at most $O(k^2\log n)$ arithmetic operations. We may then calculate all binomial expressions of the form $\binom{k'}{d}$ with $k' \leq k+1$ and $d \leq k'$ in $O(k^3)$ operations and store those too. From this information, we may calculate all Stirling numbers that will be needed in $O(k^3)$ additional time since the number of distinct $n_l, k'$ values are bounded by $k+1$. Furthermore, all other factorial and binomial expressions used in Lemma \ref{Section 3 segregated lemma}, along with the power of $2$, can be calculated and stored at this point in no more than $O(k^2\log n + k^3)$ time. Therefore, if we do this preparation first, we may calculate each $p(B_{l})^i_j$ in $O(k)$ time from the stored data. Since $i,j,$ and $l$ are each bounded by $k$, there will be at most $O(k^3)$ distinct such terms, we have that a look up table for all the $p(B_{l})^i_j$ can be generated in no more than $O(k^4)$ time. 

Given this look up table, we may then compute and store the $p(A_l)_j^0$ in $O(k)$ algebraic operations each by starting with $A_{m-1}$ and then accessing all the $p(B_{l})^i_j$ and previously computed $p(A_{l})^i_j$ to compute the next $A_{l-1}$. Since $l$ and $j$ are bounded by $k$, we have that this part of the algorithm takes at most $O(k^3)$ operations to get to $p(A_0)_0^0$, which is our final answer.

Therefore, the total run time is  $O(k^2\log n + k^3)$ to prepare for Lemma \ref{Section 3 segregated lemma}, plus $O(k^4)$ to create a look up table of the values in $p(B_k)^i_j$, plus a final $O(k^3)$ to use the recursion. Altogether, this results in a final run time of $O(k^2\log n+k^4)$ as advertised. 
\end{proof}

Before concluding, we present one more application of our recursion where we prove a version of a formula conjectured by Falque, Novelli, and Thibon in \cite{FNT_2021}. In their paper, the authors note that there is a map between admissible pinnacle sets in $[n]$ and forests of complete binary trees on $n$ vertices arranged as a sequence of trees from left to right, and such that the trees are labeled from left to right with nodes labeled in left suffix order. The map is given by the following algorithm. 

\begin{enumerate}
    \item Given some admissible pinnacle set $P = \{p_1 < p_2 < \ldots < p_k\} \subset [n]$, start with the forest consisting of a single unlabeled node. Let $i=n.$    
    \item Label the rightmost unlabeled node $i$. If $i\in P$, give this node two unlabeled children. Otherwise, create a new tree with one unlabeled node to the left of all existing trees. Let $i = i-1$.
    \item Repeat the previous step until a node has been given the label $1$, and then stop.
\end{enumerate}

To reverse this map, we start with any complete binary forest with trees arranged in a sequence from left to right and with nodes labeled as specified above. We then simply let $P$ be the set of labels of internal nodes. An example can be seen in Figure \ref{pinnacles to complete binary trees}.

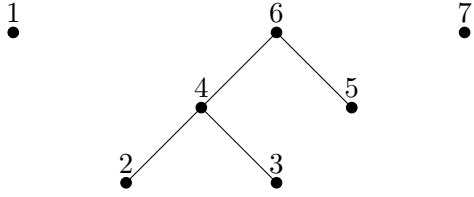
\begin{figure}
 \centering
\begin{tikzpicture}
\draw (3,3) -- (4,4) -- (5,3);
\draw (2,2) -- (3,3) -- (4,2);
        \filldraw[black] (6.5,4) circle (2pt) node[anchor=south]{7};
        \filldraw[black] (4,4) circle (2pt) node[anchor=south]{6};
        \filldraw[black] (5,3) circle (2pt) node[anchor=south]{5};
        \filldraw[black] (3,3) circle (2pt) node[anchor=south]{4};
        \filldraw[black] (4,2) circle (2pt) node[anchor=south]{3};
        \filldraw[black] (2,2) circle (2pt) node[anchor=south]{2};
        \filldraw[black] (0.5,4) circle (2pt) node[anchor=south]{1};
\end{tikzpicture}
    \caption{The complete binary forest corresponding to pinnacle set $P = \{4,6\}$ for $n=7$}
    \label{pinnacles to complete binary trees}
\end{figure}

It is not hard to show that these maps are well defined and that they are inverses of each other. In what follows, we will use the letter $F$ for forests, and $T$ for single trees. We may then write an admissible pinnacle set as a tree sequence $(T_1, T_2,  \ldots T_r)$. If a tree has only one vertex, denote it as $O$. We can also group consecutive trees together as an ordered forest; for instance, we may wish to say $(T_1, T_2, T_3, T_4) = (F_1, F_2)$ where $F_1 = (T_1)$ and $F_2 = (T_2, T_3, T_4)$. 

Using this notion of trees, we may use the notation $p(F)$ to denote the number of permutations in $S_n$ having pinnacle set represented by $(F)$. It was conjectured in \cite{FNT_2021} that for pinnacle set $F = (O,T_2,T_3, \ldots,T_r)$ we have the relation 
$$p(F) = p(O,T_2)P(O,O, T_3, \ldots, T_r).$$

Unfortunately, this is not true as can be seen by the counterexample when $n=4$ and the pinnacles set is $\{4\}$. However, a very similar result can be proved. 

\begin{theorem}
For a sequence of trees $F = (O,T_2,T_3, \ldots,T_r)$ encoding a pinnacle set, we have
$$p(F) = \frac{1}{2}p(O,T_2)p(O,O, T_3, \ldots, T_r).$$
\end{theorem}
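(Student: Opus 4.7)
The plan is to apply Theorem~\ref{Section 3 recursion} to both sides of the identity so as to factor out a common ``tail''. Setting $B_2 := B(T_3) B(T_4) \cdots B(T_r)$, we have $B(F) = [0]B(T_2) \cdot B_2$ and the block encoding $(O,O,T_3,\ldots,T_r)$ equals $[0,0] \cdot B_2$. Applying Theorem~\ref{Section 3 recursion} to each decomposition yields
\begin{align*}
p(F) &= \sum_{a\ge 1} p([0]B(T_2))^{a-1}_{0} \, p(B_2)^{0}_{a}, \\
p(O,O,T_3,\ldots,T_r) &= \sum_{a\ge 1} p([0,0])^{a-1}_{0} \, p(B_2)^{0}_{a}.
\end{align*}
The goal then reduces to the two termwise claims that $p([0,0])^{a-1}_{0}$ equals $2$ for $a \in \{1,2\}$ and $0$ otherwise, and that $p([0]B(T_2))^{a-1}_{0}$ equals $p(O,T_2)$ for $a \in \{1,2\}$ and $0$ otherwise. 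Together these make the first sum equal to $\tfrac{1}{2} p(O,T_2)$ times the second, giving the desired identity.

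The claim for $[0,0]$ is immediate from Lemma~\ref{Section 3 segregated lemma}. For $[0]B(T_2)$, the case $a=1$ is just the definition $p([0]B(T_2))^{0}_{0} = p(O,T_2)$, and the case $a\ge 3$ follows from a pinnacle-count bound: $P_{T_2} \cup \{(n+1)^{a-1}\}$ contains $k_2 + a - 1$ pinnacles, while a permutation of length $2k_2 + a + 1$ admits at most $\lfloor (2k_2+a)/2\rfloor = k_2 + \lfloor a/2\rfloor$ interior, pairwise non-adjacent pinnacles, forcing $a\le 2$. The main obstacle is therefore the case $a = 2$, namely $p([0]B(T_2))^{1}_{0} = p(O,T_2)$, which I would handle via an insertion bijection between $P(O,T_2)$ and $P([0]B(T_2))^{1}_{0}$.

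For the bijection, fix $\pi \in P(O,T_2)$, which has length $n = 2k_2 + 2$ and exactly $k_2$ pinnacles. Since pinnacles are interior and pairwise non-adjacent, marking each position $N$ or $P$ shows that the number of adjacent non-pinnacle (\textit{NN}) pairs in $\pi$ is precisely $(n-1) - 2k_2 = 1$. Insert $n+1$ into this unique NN location to form $\pi'$ of length $n+1$; then $n+1$ is an interior maximum and so is a pinnacle, while its two neighbors, already non-pinnacles, acquire the larger value $n+1$ as a neighbor and hence remain non-pinnacles, and every other position has unchanged neighbors. Thus $\pi' \in P([0]B(T_2))^{1}_{0}$. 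Conversely, given $\pi' \in P([0]B(T_2))^{1}_{0}$, its $k_2+1$ pinnacles in length $2k_2+3$ saturate the pinnacle bound, forcing a fully alternating pattern with pinnacles at all even positions and $n+1$ placed at some $2j$; removing $n+1$ makes its old neighbors $l := \pi'_{2j-1}$ and $r := \pi'_{2j+1}$ adjacent, but the pinnacle conditions at positions $2j-2$ and $2j+2$ in $\pi'$ (when these exist) give $\pi'_{2j-2} > l$ and $\pi'_{2j+2} > r$, so neither $l$ nor $r$ can be a pinnacle in the resulting permutation, while all other pinnacles are preserved (boundary cases $j = 1$ or $j = k_2+1$ send $l$ or $r$ to an endpoint, which is trivially not a pinnacle). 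The two maps are mutual inverses, yielding the bijection and $p([0]B(T_2))^{1}_{0} = p(O,T_2)$, which completes the proof.
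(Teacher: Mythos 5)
Your proof is correct. The treatment of the left-hand side coincides with the paper's: both apply Theorem \ref{Section 3 recursion} to the decomposition $B(F) = ([0]B(T_2))\cdot B_2$, observe that $p([0]B(T_2))^{a-1}_0$ vanishes for $a\ge 3$ because there would be too many pinnacles, and identify $p([0]B(T_2))^{1}_0$ with $p(O,T_2)$ by inserting a new maximum into the unique adjacent non-pinnacle pair (your version of this bijection is spelled out more carefully than the paper's, which asserts the existence and uniqueness of the insertion slot without the counting argument $(n-1)-2k_2=1$ or the verification that the inverse is well defined). Where you genuinely diverge is on the right-hand side: the paper proves $p(O,O,T_3,\ldots,T_r) = 2\left(p(B_2)_1 + p(B_2)_2\right)$ by an ad hoc two-to-one map that cases on whether the values $1$ and $2$ are adjacent in the permutation, whereas you apply the same block recursion a second time to the decomposition $[0,0]\cdot B_2$ and read off $p([0,0])^{0}_0 = p([0,0])^{1}_0 = 2$ (and $=0$ for higher superscripts) from Lemma \ref{Section 3 segregated lemma}. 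This makes the factor $\tfrac12$ emerge from a termwise comparison of two instances of the same recursion, which is more systematic and eliminates one bespoke bijection. One minor caveat shared by both arguments: Theorem \ref{Section 3 recursion} requires both blocks to be non-empty, so the case $r=2$ must be noted separately; there the identity reduces to $p(O,T_2) = \tfrac12\, p(O,T_2)\, p(O,O)$, which holds since $p(O,O)=2$.
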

\begin{proof}
Since the trees $F$ are labeled from left to right, we have that all nodes in $(O,T_2)$ will be smaller than all nodes in the rest of $F$. Therefore, we may consider a block composition $B_1B_2$ corresponding to this pinnacle set where $B_1$ corresponds only to the labels of the nodes in $(O,T_2)$ while $B_2$ has the rest. Then by our notation, we will have that $p(B_1) = p(O,T_2)$ and $p(B_2) = p(T_3, \ldots, T_r)$.

We then have by Theorem \ref{Section 3 recursion} that 
$$p(F) = p(B_1B_2) = \sum_{m=0}^b p(B_1)^mp(B_2)_{m+1}.$$
Now note that if $m>1$ we will have that $p(B_1)^m = 0$ since there will be too many pinnacles for the non-pinnacles to separate, and so this sum simplifies to
$$p(F) = p(B_1)p(B_2)_{1} + p(B_1)^1p(B_2)_2$$
First, note that $p(B_1) = p(B_1)^1$ since there is a simple bijection between the permutations they count. Namely, for any $\pi$ counted by $p(B_1)$ there will always exist exactly one index where two non-pinnacles are adjacent and where we may insert a new largest element to get a permutation counted by $p(B_1)^1$. Similarly, we my reverse this bijection by deleting the largest element from some $\pi'$ counted by $p(B_1)^1$. This will not create any new pinnacles because all non-pinnacles will still be adjacent to either the end of the permutation or to one of the other pinnacles. 

We now consider a block $B_3$ corresponding to $(O,O, T_3, \ldots, T_r)$, which will be $B_2$ with two zeros appended to the beginning. We claim that $p(B_3) = 2s$ where $s = p(B_2)_{1} + p(B_2)_2$. To see this, note that every permutation counted by $p(B_3)$ will either have the $1$ and $2$ adjacent, or separate. If they are adjacent, replace both by a single $0$ and subtract $2$ from all other elements. This will not destroy any pinnacles since neither $1$ nor $2$ were originally pinnacles, and so the result will be a permutation counted by $p(B_2)_{1}$. Since this map is clearly reversible up to the ordering of $1$ and $2$, we also see that this is a two-to-one mapping.

On the other hand, suppose that $1$ and $2$ are not adjacent. Then it is easy to see, by reasons similar to above, that if we replace both $1$ and $2$ by separate $0's$ and then subtract $2$ from all other elements we will end up with a permutation counted by $p(B_2)_{2}$. This will again be a two-to-one mapping and so we have that $p(B_3) = 2s$. 

Therefore, we have 
$$p(F) = p(B_1B_2) = p(B_1)p(B_2)_{1} + p(B_1)^1p(B_2)_2 = p(B_1)\left(p(B_2)_{1} + p(B_2)_2\right) = p(B_1)\frac{1}{2}p(B_3)$$
the last of which is equal to $\frac{1}{2}p(O,T_2)p(O,O, T_3, \ldots, T_r)$ as desired.
\end{proof}


\begin{thebibliography}{10}

\bibitem{AGRR_2020}
Ron~M. Adin, Ira~M. Gessel, Victor Reiner, and Yuval Roichman.
\newblock Cyclic quasi-symmetric functions.
\newblock {\em S\'{e}m. Lothar. Combin.}, 82B:Art. 67, 12, 2020.

\bibitem{Callan_2002}
David Callan.
\newblock Pattern avoidance in circular permutations, 2002.

\bibitem{CW_2019}
\'{E}va Czabarka and Zhiyu Wang.
\newblock Erd\H{o}s-{S}zekeres theorem for cyclic permutations.
\newblock {\em Involve}, 12(2):351--360, 2019.

\bibitem{DNPT_2018}
Robert Davis, Sarah~A. Nelson, T.~Kyle~Petersen, and Bridget~E. Tenner.
\newblock The pinnacle set of a permutation.
\newblock {\em Discrete Math.}, 341(11):3249--3270, 2018.

\bibitem{DHHIN_2021}
Alexander Diaz-Lopez, Pamela~E. Harris, Isabella Huang, Erik Insko, and Lars
  Nilsen.
\newblock A formula for enumerating permutations with a fixed pinnacle set.
\newblock {\em Discrete Math.}, 344(6):Paper No. 112375, 15, 2021.

\bibitem{DLMSSS_jun_2021}
Rachel Domagalski, Jinting Liang, Quinn Minnich, Bruce~E. Sagan, Jamie Schmidt,
  and Alexander Sietsema.
\newblock Cyclic pattern containment and avoidance, 2021.

\bibitem{DLMSSS_2021}
Rachel Domagalski, Jinting Liang, Quinn Minnich, Bruce~E. Sagan, Jamie Schmidt,
  and Alexander Sietsema.
\newblock Pinnacle set properties, 2021.

\bibitem{EB_2021}
Sergi Elizalde and Bruce Sagan.
\newblock Consecutive patterns in circular permutations, 2021.

\bibitem{FNT_2021}
Justine Falque, Jean-Christophe Novelli, and Jean-Yves Thibon.
\newblock Pinnacle sets revisited, 2021.

\bibitem{Fang_2021}
Wenjie Fang.
\newblock Efficient recurrence for the enumeration of permutations with fixed
  pinnacle set, 2021.

\bibitem{GLW_2018}
Daniel Gray, Charles Lanning, and Hua Wang.
\newblock Pattern containment in circular permutations.
\newblock {\em Integers}, 18B:Paper No. A4, 13, 2018.

\bibitem{GLW_2019}
Daniel Gray, Charles Lanning, and Hua Wang.
\newblock Patterns in colored circular permutations.
\newblock {\em Involve}, 12(1):157--169, 2019.

\bibitem{Poly_2021}
Alexios~P. Polychronakos.
\newblock Length and area generating functions for height-restricted motzkin
  meanders, 2021.

\bibitem{Rusu_2020}
Irena Rusu.
\newblock {Sorting permutations with a fixed pinnacle set}.
\newblock {\em The electronic journal of combinatorics}, 27(3), 2020.

\bibitem{RT_2021}
Irena Rusu and Bridget~Eileen Tenner.
\newblock Admissible pinnacle orderings.
\newblock {\em Graphs Combin.}, 37(4):1205--1214, 2021.

\bibitem{Strehl_1978}
Volker Strehl.
\newblock Enumeration of alternating permutations according to peak sets.
\newblock {\em J. Combinatorial Theory Ser. A}, 24(2):238--240, 1978.

\bibitem{Vella_2002}
Antoine Vella.
\newblock Pattern avoidance in permutations: linear and cyclic orders.
\newblock volume~9, pages Research paper 18, 43. 2002/03.
\newblock Permutation patterns (Otago, 2003).

\end{thebibliography}

\nocite{*}
\bibliographystyle{alpha}

\end{document}